\definecolor{rltblue}{rgb}{0,0,0.75}
\title{A Feiner Look at the Intermediate Degrees}
\keywords{Feiner hierarchy, 
          low for Feiner, 
          intermediate for Feiner,
          high for Feiner,
          intermediate degrees, computable Boolean algebras}
\author[Hirschfeldt]{Denis R.\ Hirschfeldt}
\author[Kach]{Asher M.\ Kach}
\author[Montalb\'an]{Antonio Montalb\'an}
\address{Department of Mathematics, University of Chicago}
\email{drh@math.uchicago.edu}
\address{Department of Mathematics, University of California,
Berkeley}
\email{antonio@math.berkeley.edu}
\address{Google LLC}
\email{asher.kach@gmail.com}
\thanks{Hirschfeldt was partially supported by NSF grant DMS-1600543.
Montalb\'an was partially supported by NSF grant DMS-1954062.
The authors thank Dan Rosendorf for helping typeset the hierarchy
figure in Proposition~\ref{propHierarchy}.}
\theoremstyle{definition}
\newtheorem{defn}{Definition}[section]
\newcounter{claimCounter}[defn]
\newtheorem{rmk}[defn]{Remark}
\newtheorem{conj}[defn]{Conjecture}
\theoremstyle{plain}
\newtheorem{thm}[defn]{Theorem}
\newtheorem{theorem}[defn]{Theorem}
\newtheorem{prop}[defn]{Proposition}
\newtheorem{lemma}[defn]{Lemma}
\newtheorem{corollary}[defn]{Corollary}
\numberwithin{equation}{section}
\newcommand{\sub}[1]{_{\textrm{\tiny{#1}}}}
\newcommand{\tless}{<\sub{T}}
\newcommand{\tequiv}{\equiv\sub{T}}
\newcommand{\uhr}{\upharpoonright}
\newcommand{\converges}{\mathord{\downarrow}}
\newcommand{\diverges}{\mathord{\uparrow}}
\renewcommand{\leq}{\leqslant}
\renewcommand{\geq}{\geqslant}
\renewcommand{\phi}{\varphi}
\DeclareMathOperator{\dom}{dom}
\newcommand{\w}{\omega}
\newcommand{\zero}[1]{{\emptyset}^{{(#1)}}}
\newcommand{\zeroj}{\emptyset'}
\def\om{\w}
\def\B{{\mathcal B}}
\def\subseteqne{
\hbox{%
\begin{rotate}{45}$\subsetneq$\end{rotate}%
}%
}
\def\subseteqse{
\setbox1=\hbox{\begin{rotate}{-45}$\subsetneq$\end{rotate}}%
\kern -7pt\hbox{\raise 6.72pt\copy1}
}%
\begin{document}

\date{\today}

\begin{abstract}
We say that a set $S$ is $\Delta^0_{(n)}(X)$ if membership of $n$ in
$S$ is a $\Delta^0_{n}(X)$ question, uniformly in $n$.  A set $X$ is
\emph{low for $\Delta$-Feiner} if every set $S$ that is
$\Delta^0_{(n)}(X)$ is also $\Delta^0_{(n)}(\emptyset)$. It is easy to
see that every low$_n$ set is low for $\Delta$-Feiner, but we show
that the converse is not true by constructing an intermediate
c.e.\ set that is low for $\Delta$-Feiner.  We also study variations
on this notion, such as the sets that are $\Delta^0_{(bn+a)}(X)$,
$\Sigma^0_{(bn+a)}(X)$, or $\Pi^0_{(bn+a)}(X)$, and the sets that are
low, intermediate, and high for these classes.  In doing so, we obtain
a result on the computability of Boolean algebras, namely that there
is a Boolean algebra of intermediate c.e.\ degree with no computable
copy.
\end{abstract}

\maketitle

\section{Introduction}
\label{secIntro}

In~\cite{Fein1970}, Feiner introduced a hierarchy of complexities that
we term the \emph{Feiner $\Delta$-hierarchy}, for sets computable
in~$\zero{\w}$. His original motivation was to build a computably
enumerable Boolean algebra that has no computable copy. We can relativize
this hierarchy to an arbitrary set $X$, and define the analogous
\emph{Feiner $\Sigma$-hierarchy} and \emph{Feiner $\Pi$-hierarchy}, as
follows.

\begin{defn}
\label{defnFDelta} 
Fix a set $X \subseteq \w$ and $a,b\in \om$ with $b \geq 1$.  

A set $S \subseteq \w$ is \emph{$\Delta^0_{(bn+a)}(X)$ in the Feiner
$\Delta$-hierarchy}, denoted $S \in \Delta^0_{(bn+a)}(X)$, if there
is a Turing functional $\Phi_e$ such that
\[
        \Phi_e^{X^{(bn+a-1)}}(n) = 
               \begin{cases}  
                   1   &  \mbox{if } n \in S  \\
		   0   &  \mbox{if } n \notin S,   
               \end{cases}
\]
where $X^{(m)}$ is the $m$th Turing jump of $X$ (and
$X^{(-1)}=X^{(0)}=X$).

A set $S \subseteq \w$ is \emph{$\Sigma^0_{(bn+a)}(X)$ in the Feiner
$\Sigma$-hierarchy}, denoted $S \in \Sigma^0_{(bn+a)}(X)$, if there
is a computably enumerable operator $W_e$ such that
\[
    n \in S \iff n \in W_e^{X^{(bn+a-1)}}.
\]

A set $S \subseteq \w$ is \emph{$\Pi^0_{(bn+a)}(X)$ in the Feiner
$\Pi$-hierarchy}, denoted $S \in \Pi^0_{(bn+a)}(X)$, if its
complement is $\Sigma^0_{(bn+a)}(X)$.
\end{defn}

In other words, a set $S$ is $\Delta^0_{(bn+a)}(X)$ (respectively,
$\Sigma^0_{(bn+a)}(X)$ or $\Pi^0_{(bn+a)}(X)$) if membership of $n$
in $S$ is a $\Delta^0_{bn+a}(X)$ (respectively, $\Sigma^0_{bn+a}(X)$
or $\Pi^0_{bn+a}(X)$) question, uniformly in $n$. It is easy to see
that every $\Delta^0_{(bn+a)}(X)$ set is $\Sigma^0_{(bn+a)}(X)$ and
$\Pi^0_{(bn+a)}(X)$, and that every $\Sigma^0_{(bn+a)}(X)$ and
$\Pi^0_{(bn+a)}(X)$ set is computable in $X^{(\w)}$ (see
Proposition~\ref{propHierarchy}).

We study the classes of sets that are low, intermediate, and high with
respect to these complexity classes. Given a relativizable complexity
class $\Gamma$ (for example, $\Delta^0_2$ or $\Delta^0_{(bn+a)}$), we
say that a set $X \subseteq \w$ is \emph{low for $\Gamma$} if every
set $S$ belonging to $\Gamma(X)$ belongs to $\Gamma(\emptyset)$; that
a set $X \subseteq \w$ is \emph{high for $\Gamma$} if every set $S$
belonging to $\Gamma(\zeroj)$ belongs to $\Gamma(X)$; and that a set
$S$ is \emph{intermediate for $\Gamma$} if it is neither low nor high
for $\Gamma$. Thus, for example, a set $X$ is low for $\Delta^0_2$ if
and only if it is low in the usual sense. More generally, the low$_n$
sets are the sets that are low for $\Delta^0_{n+1}$. It is not hard to
see that every low$_n$ set is low for $\Delta^0_{(bn+a)}$ for all $a$
and $b$, for example (see Proposition~\ref{propLowkHighk}).

Since $S \in \Sigma^0_{(an+b)}$ if and only if $\overline{S} \in
\Pi^0_{(an+b)}$, being low for $\Sigma^0_{(an+b)}$ is equivalent to
being low for $\Pi^0_{(an+b)}$, so from now on we do not consider the
latter notion explicitly, and similarly for being high or intermediate
for $\Pi^0_{(an+b)}$.

In Section~\ref{secBasic}, we show that for all $a,b,a',b' \in \w$
with $b,b' \geq 1$, a set is low for $\Delta^0_{(b n + a)}$ if and
only if it is low for $\Delta^0_{(b' n + a')}$.  Consequently, when a
set is low for $\Delta^0_{(bn+a)}$ for some (and hence all) $a,b$, we
say it is \emph{low for $\Delta$-Feiner}. Similar results in that
section justify analogous definitions of \emph{high for
$\Delta$-Feiner}, \emph{low for $\Sigma$-Feiner}, and \emph{high for
$\Sigma$-Feiner}. We also show that being low for $\Sigma$-Feiner
implies being low for $\Delta$-Feiner, and similarly for highness.

\subsection*{Our Results}

In Section~\ref{secTuring}, we prove that there exists a computably
enumerable set of intermediate Turing degree (in the usual sense of
being neither low$_n$ nor high$_n$ for any $n$) that is low for
$\Sigma$-Feiner, and hence low for $\Delta$-Feiner. We therefore find
that (in general and for c.e.\ sets),
\[
\mbox{low}_1 \subsetneq \mbox{low}_2\subsetneq \cdots
\subsetneq\mbox{low}_n\subsetneq \cdots \subsetneq \mbox{low for
  $\Sigma$-Feiner} \subseteq \mbox{low for $\Delta$-Feiner}.
\]
We conjecture that the last containment is also proper.

We also examine the classes of sets that are high for $\Delta$-Feiner
and high for $\Sigma$-Feiner.  We obtain similar results, and in
particular find that (in general and for c.e.\ sets), 
\[
\mbox{high}_1 \subsetneq \mbox{high}_2\subsetneq \cdots
\subsetneq\mbox{high}_n\subsetneq \cdots \subsetneq \mbox{high for
  $\Sigma$-Feiner} \subseteq \mbox{high for $\Delta$-Feiner}.
\]
Again we conjecture that the last containment is proper.

Finally, we show that there is a c.e.\ set that is intermediate for
$\Delta$-Feiner, and hence intermediate for $\Sigma$-Feiner. Thus,
assuming our conjectures above hold, the intermediate (c.e.)\ degrees
can be split into five nonempty classes: low for $\Sigma$-Feiner, low
for $\Delta$-Feiner but not for $\Sigma$-Feiner, intermediate for
$\Delta$-Feiner, high for $\Delta$-Feiner but not for $\Sigma$-Feiner,
and high for $\Sigma$-Feiner.

\subsection*{An Application to Computable Structures}

By extending the ideas in~\cite{Fein1970}, Thurber obtained the
following result.

\begin{theorem} [Thurber~\cite{Thur1994}; see also~{\cite[\S~18.3]{AsKn2000}}] 
\label{thmThur} 
There is a sequence of infinitary sentences $\psi_0, \psi_1, \dots$ in
the language of Boolean algebras such that for every set $S \subseteq
\om$, the following are equivalent.
\begin{enumerate}

\item \label{th:FeinerThurber part 1} There exists a computable
  Boolean algebra $\B$ such that  $S = \{n: \B \vDash \psi_n\}$.

\item The set $S$ is $\Pi^0_{(2n+4)}(\emptyset)$.  

\end{enumerate}
\end{theorem}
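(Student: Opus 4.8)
The plan is to establish the two implications separately, following Feiner's construction as refined by Thurber, with essentially all of the work in $(2)\Rightarrow(\ref{th:FeinerThurber part 1})$. First I would pin down the sentences: each $\psi_n$ is a specific $\Pi_{2n+4}$ computable infinitary sentence in the language of Boolean algebras describing the iterated Fr\'echet (superatomic) structure of $\mathcal B$ at depth $n$, taken from the family isolated by Feiner~\cite{Fein1970} and Thurber~\cite{Thur1994}; since the $k$th Fr\'echet ideal is a $\Sigma_{2k}$ computable infinitary relation, unwinding the definitions confirms that $\psi_n$ is $\Pi_{2n+4}$. What matters about the family is that it is ``flexible'', i.e.\ complete for its level in the sense needed below. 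With this choice, $(\ref{th:FeinerThurber part 1})\Rightarrow(2)$ is immediate: a computable Boolean algebra has computable atomic diagram, so by the standard analysis of the complexity of computable infinitary formulas the truth value in $\mathcal B$ of a $\Pi_\alpha$ sentence is a $\Pi^0_\alpha(\emptyset)$ fact, uniformly in an index for the sentence; applying this with $\alpha = 2n+4$ shows that $S = \{n : \mathcal B \vDash \psi_n\}$ is $\Pi^0_{(2n+4)}(\emptyset)$.

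For the converse, the heart of the matter is a coding lemma: for each $n$ there is a uniformly computable family of ``gadget'' Boolean algebras $\mathcal G_n[\,\cdot\,]$ such that, from a $\Pi^0_{2n+4}(\emptyset)$ index for a statement $P$, one effectively produces a computable Boolean algebra $\mathcal G_n[P]$ whose value for $\psi_n$ is exactly the truth value of $P$, while the Fr\'echet structure of $\mathcal G_n[P]$ is ``inert'' at all other depths --- each $\psi_m$ with $m \neq n$ taking a fixed, $P$-independent value in $\mathcal G_n[P]$. Granting this, I would build the required $\mathcal B$ by amalgamating the gadgets $\mathcal G_n[P_n]$, where $P_n$ is the $\Pi^0_{2n+4}$ statement ``$n \in S$'' furnished by the hypothesis, along a fixed rigid computable skeleton --- a direct sum of the gadgets separated by spacer blocks, or an interval-algebra scaffold over a fixed linear order --- engineered so that the depth-$n$ Fr\'echet data of $\mathcal B$ is exactly that of $\mathcal G_n[P_n]$ together with a predictable boundary contribution. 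Then $\mathcal B \vDash \psi_n \iff n \in S$, and $\mathcal B$ is computable because the $\mathcal G_n[P_n]$ are produced uniformly.

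To build the gadgets I would start from a base-level coding of a $\Sigma^0_3$ fact into a computable Boolean algebra --- Feiner's original device, making a relative algebra atomless, or a quotient trivial, exactly when the fact holds; since the fact is not computable, this is carried out by a limit / true-stage construction that starts an atomless block and collapses it whenever the fact is seen to fail --- and then apply, roughly $n$ times, a Watnick-style operation $\mathcal A \mapsto \mathcal A^{\ast}$ on Boolean algebras that raises Fr\'echet depth by one at a cost of two jumps, so that an invariant of $\mathcal A^{\ast}$ at level $2k+4$ faithfully records one of $\mathcal A$ at level $2k+2$ (absorbing more of the quantifier structure of the given $\Pi^0_{2n+4}$ predicate with each application). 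An induction on $n$ then yields $\mathcal G_n[P] \vDash \psi_n \iff P$ together with the claimed inertness; equivalently, one is checking that the back-and-forth invariants realizable by computable Boolean algebras at level $2n+4$ are exactly the Feiner-$\Pi^0_{(2n+4)}$ ones, which is where the machinery for constructing computable structures with a prescribed back-and-forth type (as in~\cite[\S~18.3]{AsKn2000}) enters.

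I expect the main obstacle to be carrying out two things simultaneously. Because $S$ is only $\Pi^0_{(2n+4)}(\emptyset)$ rather than computable, $\mathcal B$ must be written down ``blindly'' from the approximations, so the delicate bookkeeping --- which block turns out atomless, which quotient trivial --- is forced to be correct only in the limit, via a Feiner-style true-stage argument, while the presentation produced stays honestly computable. At the same time, inserting the level-$n$ gadget must not perturb the value of $\psi_m$ for any $m \neq n$, which requires the skeleton to be rigid enough that the iterated-Fr\'echet structure of $\mathcal B$ decomposes cleanly into per-level pieces; reconciling this non-interference with honest computability is where the real care is needed. The complete verification is given in~\cite{Fein1970, Thur1994} and~\cite[\S~18.3]{AsKn2000}.
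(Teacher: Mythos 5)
The paper does not prove this theorem: it is imported verbatim from Thurber~\cite{Thur1994} (see also~\cite[\S~18.3]{AsKn2000}) and used as a black box, so there is no in-paper argument to compare yours against. That said, your outline is a faithful account of how the cited proof goes. The direction (\ref{th:FeinerThurber part 1})$\Rightarrow$(2) is exactly the uniform bound on the complexity of truth of $\Pi_\alpha$ computable infinitary sentences in computable structures, and the converse is Feiner's coding of a $\Pi^0_{(2n+4)}$ set into the iterated Fr\'echet structure of a direct sum of uniformly computable gadget algebras, with Thurber's refinement pinning down the exact level $2n+4$; your identification of the depth-raising operation (two jumps per level of Fr\'echet rank) and of the non-interference requirement on the amalgamation is the right picture. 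Be clear, though, that your sketch does not establish the coding lemma: the construction of the gadgets and the verification that the direct sum's level-$m$ invariants are unperturbed for $m \neq n$ is where essentially all the content lies, and you defer both to the references --- which is acceptable here only because the paper itself treats the entire theorem as a citation.
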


\begin{corollary}
\label{boolcor}
If $X$ is not low for $\Sigma$-Feiner, then there is an
$X$-computable Boolean algebra $\B$ that has no computable copy.
\end{corollary}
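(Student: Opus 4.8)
The plan is to combine Thurber's theorem (Theorem~\ref{thmThur}), relativized to the oracle $X$, with the fact from Section~\ref{secBasic} that lowness for the Feiner $\Pi$-hierarchy does not depend on the particular shift $bn+a$.

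First I would unwind the hypothesis. Since $S \in \Sigma^0_{(bn+a)}(Z)$ exactly when $\overline{S} \in \Pi^0_{(bn+a)}(Z)$, being low for $\Sigma$-Feiner is the same as being low for the $\Pi$-version, and --- using the coincidence of these notions for different $a,b$ established in Section~\ref{secBasic} --- this is in particular the same as being low for $\Pi^0_{(2n+4)}$. So from the assumption that $X$ is not low for $\Sigma$-Feiner I would extract a set $S$ with $S \in \Pi^0_{(2n+4)}(X)$ but $S \notin \Pi^0_{(2n+4)}(\emptyset)$.

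Next I would apply Thurber's theorem over the oracle $X$. The Feiner--Thurber construction is uniform in the set it encodes, so Theorem~\ref{thmThur} relativizes: for any oracle $Z$ and the same infinitary sentences $\psi_0,\psi_1,\dots$, a set $T$ is $\Pi^0_{(2n+4)}(Z)$ if and only if there is a $Z$-computable Boolean algebra $\B$ with $T=\{n:\B\vDash\psi_n\}$. Feeding in $Z=X$ and $T=S$, I obtain an $X$-computable Boolean algebra $\B$ with $S=\{n:\B\vDash\psi_n\}$. To finish, suppose $\B$ had a computable copy $\B'$. Since each $\psi_n$ is an infinitary sentence, it is preserved under isomorphism, so $\B'\vDash\psi_n \iff \B\vDash\psi_n$ and hence $S=\{n:\B'\vDash\psi_n\}$; but then the unrelativized direction (1)$\Rightarrow$(2) of Theorem~\ref{thmThur} forces $S\in\Pi^0_{(2n+4)}(\emptyset)$, contradicting the choice of $S$. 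Therefore $\B$ has no computable copy.

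The main thing that needs care --- and essentially the only step beyond bookkeeping with Definition~\ref{defnFDelta} and Section~\ref{secBasic} --- is the relativization of Thurber's theorem, which is not stated in that form in the paper. In a full write-up I would either cite the uniformity of the construction in~\cite{Thur1994} (and~\cite{Fein1970}) explicitly, or note that the coding of $S$ into a Boolean algebra and the converse evaluation of the $\psi_n$ in a computable copy both go through verbatim when $\emptyset$ is replaced by an arbitrary oracle $X$, only shifting ``computable'' to ``$X$-computable'' throughout.
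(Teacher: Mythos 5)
Your proposal is correct and follows essentially the same route as the paper's proof: extract $S \in \Pi^0_{(2n+4)}(X) \setminus \Pi^0_{(2n+4)}(\emptyset)$ using parameter independence, apply the relativized form of Theorem~\ref{thmThur} to get an $X$-computable Boolean algebra encoding $S$, and conclude that a computable copy would contradict the choice of $S$. Your additional remarks on the uniformity needed for the relativization and on preservation of the $\psi_n$ under isomorphism are appropriate elaborations of steps the paper leaves implicit.
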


\begin{proof}
Let $S \in \Pi^0_{(2n+4)}(X) \setminus \Pi^0_{(2n+4)}(\emptyset)$.
Relativizing Theorem~\ref{thmThur} to $X$, there is an $X$-computable
Boolean algebra $\B$ such that $S$ satisfies (\ref{th:FeinerThurber
part 1}) of Theorem~\ref{thmThur}. But then this Boolean algebra
cannot be computable, or else we would have that $S$ is
$\Pi^0_{(2n+4)}(\emptyset)$ by Theorem~\ref{thmThur}.
\end{proof}

The same result follows from the work of Kach in~\cite{Kach2010},
where he studied the complexity of the Ketonen invariants on a certain
class of Boolean algebras: the class of \emph{depth zero} Boolean
algebras. He proved that a depth zero, rank $\w$ Boolean algebra has
a computable copy if and only if its Ketonen invariant is
$\Sigma^0_{(2n+3)}(\emptyset)$. As this result relativizes, it
follows that if a depth zero, rank $\w$ Boolean algebra has a
presentation in a low for $\Sigma$-Feiner set, then it has a
computable copy. We similarly obtain Corollary \ref{boolcor} from
Kach's result.

Our results below (Theorem~\ref{thmHigh} or
Theorem~\ref{thmIntermediate}) show that there exists an intermediate
c.e.\ degree that is not low for $\Sigma$-Feiner, so we obtain the
following corollary, which contrasts with Knight and Stob's result
in~\cite{KnSt2000} that every low$_4$ Boolean algebra has a computable
copy. (Whether every low$_5$ Boolean algebra has a computable copy
remains a well-known open question.)

\begin{corollary}
There is a Boolean algebra of intermediate c.e.\ degree that has no
computable copy.
\end{corollary}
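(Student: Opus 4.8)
The plan is to combine Corollary~\ref{boolcor} with the existence, to be established below, of an intermediate c.e.\ degree that is not low for $\Sigma$-Feiner. First I would apply Theorem~\ref{thmHigh} (or Theorem~\ref{thmIntermediate}) to fix a computably enumerable set $X$ whose degree is intermediate and which is not low for $\Sigma$-Feiner. By definition this means (taking the instance $\Pi^0_{(2n+4)}$, which is legitimate since lowness for $\Sigma$-Feiner does not depend on $a,b$ and coincides with lowness for the corresponding $\Pi$-class) that there is a set $S \in \Pi^0_{(2n+4)}(X) \setminus \Pi^0_{(2n+4)}(\emptyset)$. Relativizing Theorem~\ref{thmThur} to $X$ then produces an $X$-computable Boolean algebra $\B$ with $S = \{n : \B \vDash \psi_n\}$, and $\B$ can have no computable copy, since a computable copy would, by Theorem~\ref{thmThur}, place $S$ in $\Pi^0_{(2n+4)}(\emptyset)$. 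This is exactly the content of Corollary~\ref{boolcor}.

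To match the phrasing of the Knight--Stob comparison, I would then produce a copy $\mathcal{A} \cong \B$ whose atomic diagram has degree exactly that of $X$, hence an intermediate c.e.\ degree. The presentation of $\B$ supplied above is $X$-computable, so that copy already satisfies $D(\mathcal{A}) \leq_T X$; the remaining point is to secure a copy with $X \leq_T D(\mathcal{A})$ as well. Since $\B$ has no computable copy it is in particular infinite, and one codes $X$ into a presentation by the familiar device of choosing an isomorphic copy that ``acts out'' a computable enumeration of $X$ on infinitely many independent configurations of $\B$, so that membership in $X$ becomes readable, uniformly in $n$, from a finite piece of the atomic diagram; performed with the $X$-computable copy as the target, this keeps the new presentation $X$-computable and hence gives $D(\mathcal{A}) \equiv_T X$. (Alternatively, one can inspect the relativized Thurber construction and check that it already yields a copy $\B$ with $\B \equiv_T X$, since it uses only $X$ as an oracle and reflects $X$ directly into the algebra.) The Boolean algebra $\mathcal{A}$ then has intermediate c.e.\ degree and, being a copy of $\B$, has no computable copy.

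The substantive inputs are entirely the results already cited: the existence of an intermediate c.e.\ set that is not low for $\Sigma$-Feiner (Theorem~\ref{thmHigh} or Theorem~\ref{thmIntermediate}) and the relativized Thurber characterization (Corollary~\ref{boolcor}). The only step calling for care is the coding that pins the degree of a copy to that of $X$: one must verify that $\B$ genuinely contains enough room to carry the coding without disturbing its isomorphism type and that the construction stays $X$-computable, so that one obtains $D(\mathcal{A}) \equiv_T X$ rather than merely $D(\mathcal{A}) \leq_T X$. The nonexistence of a computable copy requires no further argument, being an isomorphism invariant already delivered by Corollary~\ref{boolcor}.
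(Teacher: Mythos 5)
Your proposal is correct and follows essentially the same route as the paper: the corollary is obtained by combining Corollary~\ref{boolcor} with the existence (Theorem~\ref{thmHigh} or Theorem~\ref{thmIntermediate}) of an intermediate c.e.\ degree that is not low for $\Sigma$-Feiner. Your additional coding step pinning the degree of a copy to exactly $\deg(X)$ (a standard instance of upward closure of degree spectra for non-trivial structures) is a reasonable extra precaution that the paper leaves implicit, but it does not change the argument.
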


\subsection*{Notation}

Though our notation for the most part follows~\cite{Soar1987}, we
review certain aspects of it briefly. We use upper case Greek letters
(e.g., $\Phi$, $\Psi$, $\Xi$, $\Theta$, etc.)\ to denote Turing
functionals and lower case Greek letters
(e.g., $\phi$, $\psi$, $\xi$, $\theta$, etc.)\ to denote the
corresponding use functions. We use $W_e$ to denote the domain of the
$e$th functional $\Phi_e$ and $( W_e)^{[i]}$ to denote the subset
$\{\langle x,y \rangle \in W_e : y=i\}$. We write $X =^*
Y$ to denote that the symmetric difference $(X \setminus Y) \cup (Y
\setminus X)$ is finite.

\section{Parameter Independence and Other Basic Results}
\label{secBasic}

Before studying which sets are low/intermediate/high for
$\Delta$-Feiner and $\Sigma$-Feiner, we eliminate the need for working
with $\Delta^0_{(bn+a)}$ and $\Sigma^0_{(bn+a)}$ sets for varying $a$
and $b$, through a sequence of quick lemmas. Though we state and prove
these lemmas only for being low for $\Delta^0_{(bn+a)}$, all still
work (with obvious modifications to their proofs) for being high for
$\Delta^0_{(bn+a)}$, low for $\Sigma^0_{(bn+a)}$, and high for
$\Sigma^0_{(bn+a)}$. Throughout this section, $b \geq 1$.

\begin{lemma}
\label{lemTranslate}
If $X$ is low for $\Delta^0_{(bn+a)}$, then $X$ is low for
$\Delta^0_{(bn+a')}$.
\end{lemma}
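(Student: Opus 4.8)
The plan is to reduce being low for $\Delta^0_{(bn+a)}$ to being low for $\Delta^0_{(bn+a')}$ by a simple index shift. Assume without loss of generality that $a' > a$ (the case $a' < a$ is symmetric, swapping the roles). Let $S$ be a set that is $\Delta^0_{(bn+a')}(X)$, say via a Turing functional $\Phi_e$ with $\Phi_e^{X^{(bn+a'-1)}}(n) \in \{0,1\}$ deciding $n \in S$. I want to produce from $S$ a related set $S^*$ that is $\Delta^0_{(bn+a)}(X)$, apply the hypothesis to conclude $S^* \in \Delta^0_{(bn+a)}(\emptyset)$, and then recover that $S \in \Delta^0_{(bn+a')}(\emptyset)$.

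The key step is the right choice of $S^*$. The natural move is to pad the domain: think of the input $n$ as split into a ``block'' part and a ``residue'' part. Concretely, write each input uniquely as $bn + a' - a + r$ for suitable $n$ and a bounded residue $r$ with $0 \le r < b$ (handling small inputs below $a' - a$ separately, e.g.\ by putting them into $S^*$ or not according to some fixed finite rule), and set $S^*$ so that deciding membership of such an input in $S^*$ at level $b n + a$ of the jump hierarchy over $X$ is exactly the question ``is $n \in S$?'', which is a $\Delta^0_{bn+a'}(X)$ question. Since $X^{(bn+a'-1)}$ is uniformly computable from $X^{(b\cdot(\text{shifted input})+a-1)}$ whenever the shifted input is at least $n$ — and the shift $a' - a$ ensures exactly this — the functional $\Phi_e$ can be composed with the uniform reduction computing the needed jump from the lower jump, yielding a functional witnessing $S^* \in \Delta^0_{(bn+a)}(X)$. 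The arithmetic here is routine but must be set up so that everything stays uniform in $n$ and the level indices line up; this bookkeeping is the only real content.

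By hypothesis, $X$ is low for $\Delta^0_{(bn+a)}$, so $S^* \in \Delta^0_{(bn+a)}(\emptyset)$, witnessed by some functional $\Phi_i^{\emptyset^{(bn+a-1)}}$. Now I read $S$ back out of $S^*$: for input $n$, feed the shifted input $b n + a' - a + r$ (for the appropriate fixed $r$) to $\Phi_i$ at level $\emptyset^{(b\cdot(\text{shifted input}) + a - 1)}$, and relate this jump level to $\emptyset^{(bn + a' - 1)}$ by the same uniform index arithmetic used above (now unrelativized). Finite anomalies for inputs $n$ below the threshold are absorbed by hard-coding finitely many values into the functional. This produces a functional witnessing $S \in \Delta^0_{(bn+a')}(\emptyset)$, completing the argument.

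The main obstacle — really the only place where care is needed — is making the block-and-residue encoding match the multiplicative structure $bn + a$ exactly: one must verify that the shifted input's level $b(\text{shifted input}) + a - 1$ is $\ge bn + a' - 1$ so the higher jump is genuinely available, and that the correspondence between inputs is a computable bijection (off a finite set) so that uniformity is preserved in both directions. Once the indices are checked to align, the rest is immediate from the definition of the Feiner $\Delta$-hierarchy and the hypothesis.
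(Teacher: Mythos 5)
Your overall strategy---push $S$ through a computable shift of the domain to get a set $S^*\in\Delta^0_{(bn+a)}(X)$, apply the lowness hypothesis, and read $S$ back out---is exactly the paper's strategy (the paper uses the single shift $\pi(S)=\{x: x+a-a'\in S\}$, with no residues). But the level arithmetic that you defer as ``routine bookkeeping'' is precisely where the argument fails for $b\ge 2$, and your block-and-residue encoding does not repair it. If input $n$ of $S$ is sent to input $m=bn+(a'-a)+r$ of $S^*$, then the jump level attached to $m$ as an input of a $\Delta^0_{(bk+a)}$ set is $bm+a-1=b^2n+b(a'-a+r)+a-1$, not $bn+a-1$ (at one point you write the level as ``$bn+a$'' with $n$ the block index, which conflates the input of $S^*$ with the input of $S$). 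The forward step needs $bm+a-1\ge bn+a'-1$, i.e.\ $b(m-n)\ge a'-a$; this holds for your $m$, and is what you check (note that your stated condition ``shifted input at least $n$'' is not sufficient by itself). The recovery step, however, needs the \emph{reverse} inequality $bm+a-1\le bn+a'-1$, i.e.\ $b(m-n)\le a'-a$, since one cannot compute the higher jump $\emptyset^{(bm+a-1)}$ from $\emptyset^{(bn+a'-1)}$. The two requirements force $b(m-n)=a'-a$ exactly, which has an integer solution only when $b\mid a'-a$. Concretely, for $b=2$, $a=0$, $a'=1$, your shifted inputs for $n$ are $2n+1$ and $2n+2$, sitting at levels $\emptyset^{(4n+1)}$ and $\emptyset^{(4n+3)}$, neither of which is available from the oracle $\emptyset^{(2n)}$ that witnesses $S\in\Delta^0_{(2n+1)}(\emptyset)$; no choice of the residue $r$ helps, since every $m$ in the block has $bm+a$ far above $bn+a'$.

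To be fair, the paper's own one-sentence proof uses the same single additive shift and is silent on exactly this point; the shift aligns the levels on the nose only when $b=1$ (where your argument, like the paper's, is correct) or more generally when $b\mid a'-a$. But the lemma is stated and used for arbitrary $b$ (Lemma~\ref{lemShrink} invokes it with modulus $b$ to pass from $\Delta^0_{(bn)}$ to $\Delta^0_{(bn+i)}$), so as a self-contained proof your proposal does not close the case $b\ge 2$ with $b\nmid a'-a$. What is missing is an encoding in which the level of the image input equals the level of the source input \emph{exactly}, as happens in Lemmas~\ref{lemShrink} and~\ref{lemStretch} (e.g.\ the map $k\mapsto bk+i$ sends a level-$(bk+i-1)$ question of $\Delta^0_{(n)}$ to a level-$(bk+i-1)$ question of $\Delta^0_{(bn+i)}$); an additive shift of the input within a fixed modulus $b>1$ cannot achieve this, because it moves the level by a multiple of $b$ while the target offset $a'-a$ need not be one.
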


\begin{proof}
Suppose that $X$ is low for $\Delta^0_{(bn+a)}$. Let $S \in
\Delta^0_{(bn+a')}(X)$. Then $\pi(S) := \{x \in \omega : x+a-a' \in
S\} \in \Delta^0_{(bn+a)}(X)$, so $\pi(S) \in
\Delta^0_{(bn+a)}(\emptyset)$. It follows that $S \in
\Delta^0_{(bn+a')}(\emptyset)$.
\end{proof}

\begin{lemma}
\label{lemShrink}
If $X$ is low for $\Delta^0_{(bn)}$, then $X$ is low for
$\Delta^0_{(n)}$.
\end{lemma}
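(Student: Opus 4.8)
The plan is to take an arbitrary $S \in \Delta^0_{(n)}(X)$ and manufacture from it a set $T \in \Delta^0_{(bn)}(X)$ that ``encodes'' $S$ in such a way that lowness of $X$ for $\Delta^0_{(bn)}$, applied to $T$, yields that $S \in \Delta^0_{(n)}(\emptyset)$. The obvious padding trick from Lemma~\ref{lemTranslate} does not directly work here, because we are compressing the jump index by a multiplicative factor $b$, not shifting by a constant: we need membership of $n$ in $S$, which is a $\Delta^0_n(X)$ fact, to become a fact decidable at the level $X^{(bm-1)}$ for some $m$ related to $n$. Since $bm \geq m$ when $b \geq 1$, the natural move is to place the question ``$n \in S$?'' at position $m = n$ of the new set $T$, i.e. to let $T = \{n : n \in S\} = S$ itself, and simply observe that $S \in \Delta^0_{(n)}(X)$ trivially implies $S \in \Delta^0_{(bn)}(X)$ because $X^{(n-1)}$ is computable from $X^{(bn-1)}$ uniformly in $n$ (as $bn - 1 \geq n - 1$). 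Then lowness of $X$ for $\Delta^0_{(bn)}$ gives $S \in \Delta^0_{(bn)}(\emptyset)$, i.e. $S$ is decidable from $\emptyset^{(bn-1)}$ uniformly in $n$ — but this is a \emph{weaker} statement than $S \in \Delta^0_{(n)}(\emptyset)$, so we are not done.

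The fix is to move the question ``$n \in S$?'' to a \emph{sparser} location in $T$. Define $T = \{bn : n \in S\}$ (and put, say, $bn + i \notin T$ for $0 < i < b$, or handle non-multiples-of-$b$ by some fixed trivial convention). To check $T \in \Delta^0_{(bn)}(X)$: given input $m$, if $b \nmid m$ output $0$; if $m = bn$, we must decide ``$n \in S$?'', which is $\Delta^0_n(X)$, hence computable from $X^{(n-1)}$; and since the jump available at position $m = bn$ in the $\Delta^0_{(bn)}$ hierarchy is $X^{(bn-1)}$, and $bn - 1 \geq n - 1$, we can compute $X^{(n-1)}$ from $X^{(bn-1)}$ uniformly and run the decision procedure for $S$. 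So $T \in \Delta^0_{(bn)}(X)$. By hypothesis, $T \in \Delta^0_{(bn)}(\emptyset)$: there is a functional deciding, uniformly in $m$, whether $m \in T$ using $\emptyset^{(bm-1)}$. Now to decide ``$n \in S$?'' with oracle $\emptyset^{(n-1)}$: note $n \in S \iff bn \in T$, and deciding ``$bn \in T$?'' uses $\emptyset^{(b(bn)-1)} = \emptyset^{(b^2 n - 1)}$ — which is \emph{too high}, not available from $\emptyset^{(n-1)}$.

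This shows the multiplicative compression is genuinely the obstacle, and the right construction must be recursive/iterated rather than a single substitution. The clean approach: prove the lemma by first establishing, via the $a' = 0$ case of Lemma~\ref{lemTranslate} together with an induction on $b$, that low for $\Delta^0_{(bn)}$ implies low for $\Delta^0_{((b-1)n)}$, which telescopes down to low for $\Delta^0_{(n)}$. For the inductive step, given $S \in \Delta^0_{((b-1)n)}(X)$, we want $S \in \Delta^0_{((b-1)n)}(\emptyset)$; form the auxiliary set $T$ whose $n$th question records the $\Delta^0_{(b-1)n}(X)$ fact ``$n \in S$?'' but now padded so that it sits at hierarchy level exactly $bn$ — concretely, $T \in \Delta^0_{(bn)}(X)$ whenever $S \in \Delta^0_{((b-1)n)}(X)$, because $bn - 1 \geq (b-1)n - 1$ uniformly — and then apply the hypothesis to get $T \in \Delta^0_{(bn)}(\emptyset)$, from which, running the argument in reverse with the extra jump, we recover $S \in \Delta^0_{((b-1)n)}(\emptyset)$. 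The one subtlety to get right, and the main thing to check carefully, is that all the jump-index inequalities hold \emph{uniformly in $n$} so that the Turing functionals really are uniform, and that the base case $b = 1$ is trivial; I expect the bookkeeping of ``which jump is available at which position'' to be the only real content, everything else being the padding idiom already used in Lemma~\ref{lemTranslate}.
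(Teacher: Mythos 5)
There is a genuine gap: your final ``clean approach'' founders on exactly the obstacle you correctly diagnosed in your first attempt. In the proposed inductive step you take $S \in \Delta^0_{((b-1)n)}(X)$, note that it is also in $\Delta^0_{(bn)}(X)$ (true, since $bn-1 \geq (b-1)n-1$), and apply the hypothesis to get $S \in \Delta^0_{(bn)}(\emptyset)$. But that only says that $n\in S$ is decidable from $\emptyset^{(bn-1)}$, whereas you need decidability from the \emph{weaker} oracle $\emptyset^{((b-1)n-1)}$; ``running the argument in reverse with the extra jump'' would amount to subtracting a jump, which cannot be done. More generally, if you encode the question ``$n \in S$?'' at a single position $m$ of an auxiliary set $T$ intended to lie in $\Delta^0_{(bn)}$, then for $T \in \Delta^0_{(bn)}(X)$ you need $bm-1 \geq (b-1)n-1$, while to recover $S \in \Delta^0_{((b-1)n)}(\emptyset)$ from $T \in \Delta^0_{(bn)}(\emptyset)$ you need $bm-1 \leq (b-1)n-1$; hence you need $bm=(b-1)n$ exactly, which has no integer solution $m$ unless $b \mid n$. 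So no substitution $n \mapsto m$ into a single auxiliary set can work, and the induction on $b$ does not circumvent this.

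The missing idea is to split $S$ into residue classes modulo $b$ and let the \emph{additive} parameter absorb the remainder. Given $S \in \Delta^0_{(n)}(X)$, set $S_i := \{x : bx+i \in S\}$ for $i<b$. Deciding $x \in S_i$ means deciding $bx+i \in S$, which requires exactly $X^{(bx+i-1)}$ --- precisely the oracle available at position $x$ in the $\Delta^0_{(bn+i)}$ hierarchy, with no slack in either direction. Thus $S_i \in \Delta^0_{(bn+i)}(X)$; by Lemma~\ref{lemTranslate}, lowness for $\Delta^0_{(bn)}$ yields lowness for $\Delta^0_{(bn+i)}$, so $S_i \in \Delta^0_{(bn+i)}(\emptyset)$; and reassembling, for $m=bx+i$ the question $m\in S$ is decided by $\emptyset^{(bx+i-1)}=\emptyset^{(m-1)}$ uniformly, giving $S \in \Delta^0_{(n)}(\emptyset)$. (A small separate slip: in your second attempt the oracle available at position $m=bn$ of a $\Delta^0_{(bn)}$ set is $X^{(b\cdot bn-1)}$, not $X^{(bn-1)}$; this does not affect your correct conclusion that that attempt fails.)
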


\begin{proof}
Suppose that $X$ is low for $\Delta^0_{(bn)}$. Let $S \in
\Delta^0_{(n)}(X)$. Define sets $S_i$ for $i < b$ by $S_i := \{x \in
\omega : bx + i \in S\}$. Then $S_i \in \Delta^0_{(bn+i)}(X)$. By
Lemma~\ref{lemTranslate}, $X$ is low for $\Delta^0_{(bn+i)}$, so $S_i
\in \Delta^0_{(bn+i)}(\emptyset)$. It follows that $S \in
\Delta^0_{(n)}(\emptyset)$.
\end{proof}

\begin{lemma}
\label{lemStretch}
If $X$ is low for $\Delta^0_{(n)}$, then $X$ is low for
$\Delta^0_{(bn)}$.
\end{lemma}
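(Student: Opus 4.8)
The plan is to do the reverse of what Lemma~\ref{lemShrink} does: instead of splitting a $\Delta^0_{(n)}(X)$ set into $b$ pieces, I would take a $\Delta^0_{(bn)}(X)$ set and recode it as a single $\Delta^0_{(n)}(X)$ set by ``spreading out'' its values across the coarser jump scale. Suppose $X$ is low for $\Delta^0_{(n)}$ and let $S \in \Delta^0_{(bn)}(X)$, witnessed by a functional computing membership of $n$ in $S$ from $X^{(bn-1)}$. The key observation is that deciding a $\Delta^0_{bn}(X)$ question about $n$ is, for a fixed $n$, a question at a fixed finite level of the $X$-jump hierarchy, and as $n$ grows the level $bn$ grows $b$ times as fast as the index does. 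So I would define a new set $T$ on $\omega$ whose value at the argument $m = \langle n, j\rangle$ (for $j < b$, say, or more simply: reserve blocks of length roughly $b$) encodes the bit of $X^{(bn-1)}$-information needed; the point is that $X^{(m)} \geq_T X^{(bn-1)}$ as soon as $m \geq bn - 1$, and one can arrange the coding so that a $\Delta^0_{(n)}(X)$ computation of $T$ suffices to recover $S$, because a $\Delta^0_{(n)}(X)$ functional at argument $m$ has access to $X^{(m-1)}$, which for $m$ in the appropriate block is at least $X^{(bn-1)}$.

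More concretely, the clean way to set this up is to let $T := \{x \in \omega : \lceil x/b \rceil \in S\}$ or a similar reindexing, so that the bit of $S$ at $n$ is copied into roughly $b$ consecutive positions of $T$ ending around position $bn$. Then $T \in \Delta^0_{(n)}(X)$: to decide whether $x \in T$ from $X^{(x-1)}$, note that $\lceil x/b\rceil \le x$ for $b \ge 1$, so the relevant query $n = \lceil x/b \rceil$ satisfies $bn - 1 \le x - 1$ for $x$ large enough (one checks the small cases separately), hence $X^{(x-1)}$ computes $X^{(bn-1)}$ uniformly and we may run the given $\Delta^0_{(bn)}(X)$ functional for $S$ at $n$ relative to it. Since $X$ is low for $\Delta^0_{(n)}$, we get $T \in \Delta^0_{(n)}(\emptyset)$, and then $S \in \Delta^0_{(bn)}(\emptyset)$ by reading off the value of $T$ at position $bn$ (or any position in the block for $n$), which is a $\Delta^0_{bn}(\emptyset)$ question uniformly in $n$.

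The only point requiring a little care — and the one I expect to be the main (though minor) obstacle — is getting the indices to line up exactly: the inequality $bn - 1 \le x - 1$ when $n = \lceil x/b \rceil$ can fail for small $x$, so one either shifts by a constant (using Lemma~\ref{lemTranslate} to absorb it) or handles finitely many arguments of $S$ as a finite patch, which changes nothing about membership in $\Delta^0_{(n)}$. Once the bookkeeping on which jump is available at which argument is pinned down, everything else is the same uniform-functional manipulation as in the preceding lemmas, and the appeal to lowness for $\Delta^0_{(n)}$ is immediate.
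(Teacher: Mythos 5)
Your strategy is the same as the paper's---recode $S$ along an arithmetic progression so that the jump needed for the $n$th bit of $S$ is already available at the recoded position, apply lowness for $\Delta^0_{(n)}$, and read the answer back near position $bn$---but the specific coding you wrote down breaks at infinitely many arguments, not just at ``small cases.'' Taking $T=\{x:\lceil x/b\rceil\in S\}$ and $n=\lceil x/b\rceil$, one always has $b\lceil x/b\rceil\ge x$, with equality exactly when $b\mid x$; so the inequality $bn-1\le x-1$ that you need fails for \emph{every} $x$ that is not a multiple of $b$. The inference ``$\lceil x/b\rceil\le x$, hence $bn-1\le x-1$'' is a non sequitur: it gives $n\le x$, not $bn\le x$. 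At such $x$ you would need the oracle $X^{(bn-1)}$ but only have $X^{(x-1)}$ with $x-1<bn-1$, so the claim $T\in\Delta^0_{(n)}(X)$ is unjustified (and false in general), and the failure cannot be absorbed as a finite patch.

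The repair is immediate, and your own recovery step already points to it: you only ever read $T$ at position $bn$, so the redundant copies at the other positions of each block are pure liability. Define instead $\pi(S)=\{bx:x\in S\}$, declaring every non-multiple of $b$ out of $\pi(S)$ outright (a computable condition); then deciding $m\in\pi(S)$ from $X^{(m-1)}$ is fine, since for $m=bx$ the oracle $X^{(m-1)}=X^{(bx-1)}$ is exactly what the given $\Delta^0_{(bn)}(X)$ functional for $S$ needs at $x$. This is precisely the paper's proof. (Alternatively, $\lfloor x/b\rfloor$ in place of $\lceil x/b\rceil$ works, since then every $x$ in the block for $n$ satisfies $x\ge bn$; or one can keep your $T$, observe that it lies in $\Delta^0_{(n+b-1)}(X)$, invoke Lemma~\ref{lemTranslate}, and read $S$ off at the \emph{first} position $b(n-1)+1$ of each block---but reading off at position $bn$ as you propose would then only yield $S\in\Delta^0_{(bn+b-1)}(\emptyset)$, not $S\in\Delta^0_{(bn)}(\emptyset)$.)
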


\begin{proof}
Suppose that $X$ is low for $\Delta^0_{(n)}$. Let $S \in
\Delta^0_{(bn)}(X)$. Then $\pi(S) := \{bx : x \in S\} \in
\Delta^0_{(n)}(X)$, and hence $\pi(S) \in
\Delta^0_{(n)}(\emptyset)$. It follows that $S \in
\Delta^0_{(bn)}(\emptyset)$.
\end{proof}

Combining Lemmas~\ref{lemTranslate},~\ref{lemShrink},
and~\ref{lemStretch}, we obtain the desired invariance.

\begin{prop}
\label{propInvariance}
If $X$ is low (respectively, intermediate or high) for
$\Delta^0_{(bn+a)}$ for some $a,b$, then $X$ is low (respectively,
intermediate or high) for $\Delta^0_{(b'n+a')}$ for all $a',b'$. If
$X$ is low (respectively, intermediate or high) for
$\Sigma^0_{(bn+a)}$ for some $a,b$, then $X$ is low (respectively,
intermediate or high) for $\Sigma^0_{(b'n+a')}$ for all $a',b'$. 
\end{prop}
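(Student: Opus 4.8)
The plan is to assemble the proposition directly from the three preceding lemmas, first settling the ``low for $\Delta$'' case and then deducing the ``high'' and ``intermediate'' cases and the $\Sigma$ variants from it.

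For ``low for $\Delta^0_{(bn+a)}$'', I would chain the lemmas through the normal form $\Delta^0_{(n)}$. Starting from the hypothesis that $X$ is low for $\Delta^0_{(bn+a)}$, apply Lemma~\ref{lemTranslate} (with target constant $0$) to get that $X$ is low for $\Delta^0_{(bn)}$, and then Lemma~\ref{lemShrink} to get that $X$ is low for $\Delta^0_{(n)}$. Now run the chain in the other direction toward the target parameters: Lemma~\ref{lemStretch} gives low for $\Delta^0_{(b'n)}$, and one more application of Lemma~\ref{lemTranslate} gives low for $\Delta^0_{(b'n+a')}$. Since $a,b,a',b'$ were arbitrary admissible values, this shows that all the ``low for $\Delta^0_{(bn+a)}$'' notions coincide.

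For the ``high'' and ``$\Sigma$'' variants, I would invoke the remark preceding the lemmas: each of Lemmas~\ref{lemTranslate},~\ref{lemShrink}, and~\ref{lemStretch} holds verbatim after the obvious modifications (replacing ``low'' by ``high'' and/or ``$\Delta$'' by ``$\Sigma$''), since the reindexing map $\pi$ respects the $\Sigma^0$ form just as it does the $\Delta^0$ form, and highness is obtained from the same arguments by interchanging the roles of $X$ and $\emptyset$. Hence the identical chaining argument yields that ``high for $\Delta^0_{(bn+a)}$'', ``low for $\Sigma^0_{(bn+a)}$'', and ``high for $\Sigma^0_{(bn+a)}$'' are each parameter-independent.

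Finally, the ``intermediate'' case is a purely formal consequence. Having established that, for all admissible $a,b,a',b'$, the statement ``$X$ is low for $\Delta^0_{(bn+a)}$'' is equivalent to ``$X$ is low for $\Delta^0_{(b'n+a')}$'', and likewise for highness, we conclude: if $X$ is neither low nor high for $\Delta^0_{(bn+a)}$, then $X$ is neither low nor high for $\Delta^0_{(b'n+a')}$, i.e., intermediate for the latter, and symmetrically in the $\Sigma$ case. I do not expect any genuine obstacle here; the only point requiring care is to note that the low/high invariance is a true equivalence (not just a one-way implication), so that ``intermediate'' transfers in both directions.
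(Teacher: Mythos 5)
Your proposal is correct and matches the paper's argument: the paper proves this proposition simply by combining Lemmas~\ref{lemTranslate}, \ref{lemShrink}, and~\ref{lemStretch} (chained through the normal form $\Delta^0_{(n)}$ exactly as you describe), relying on the earlier remark that the lemmas hold mutatis mutandis for the high and $\Sigma$ variants, with the intermediate case following formally. Nothing further is needed.
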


This proposition justifies our use of terms like low for
$\Delta$-Feiner. We also have the following relationship.

\begin{prop}
\label{propSigmaDelta}
If $X$ is low (respectively, high) for $\Sigma$-Feiner, then $X$ is
low (respectively, high) for $\Delta$-Feiner.
\end{prop}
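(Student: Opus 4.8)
The plan is to route everything through the ``$\Delta = \Sigma \cap \Pi$'' equality, effectivized and relativized to an arbitrary oracle. Concretely, I would first establish: for any $Z \subseteq \omega$, a set $S$ is $\Delta^0_{(bn+a)}(Z)$ if and only if both $S$ and $\overline S$ are $\Sigma^0_{(bn+a)}(Z)$. The forward direction is already recorded in the excerpt. For the reverse direction, suppose $W_e$ is a c.e.\ operator with $n \in S \iff n \in W_e^{Z^{(bn+a-1)}}$ and $W_{e'}$ is a c.e.\ operator with $n \in \overline S \iff n \in W_{e'}^{Z^{(bn+a-1)}}$. I would define a Turing functional that, on input $n$ with oracle $Z^{(bn+a-1)}$, simultaneously enumerates $W_e^{Z^{(bn+a-1)}}$ and $W_{e'}^{Z^{(bn+a-1)}}$ and outputs $1$ or $0$ according to whichever of the two first enumerates $n$. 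Since $S$ and $\overline S$ partition $\omega$, exactly one of the two operators enumerates $n$, so this functional is total; moreover its index depends only on $e$ and $e'$, so the procedure is uniform in $n$, witnessing $S \in \Delta^0_{(bn+a)}(Z)$.

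Granting this, the lowness half is immediate. Since $X$ is low for $\Sigma$-Feiner, by Proposition~\ref{propInvariance} it is low for $\Sigma^0_{(bn+a)}$ for every $a,b$; fix one such pair. Given $S \in \Delta^0_{(bn+a)}(X)$, both $S$ and $\overline S$ lie in $\Sigma^0_{(bn+a)}(X)$, hence in $\Sigma^0_{(bn+a)}(\emptyset)$ by lowness, and so $S \in \Delta^0_{(bn+a)}(\emptyset)$ by the equivalence above applied with $Z = \emptyset$. Thus $X$ is low for $\Delta^0_{(bn+a)}$, and therefore low for $\Delta$-Feiner, again by Proposition~\ref{propInvariance}.

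The highness half is completely parallel. If $X$ is high for $\Sigma$-Feiner, fix $a,b$ with $X$ high for $\Sigma^0_{(bn+a)}$. Given $S \in \Delta^0_{(bn+a)}(\emptyset')$, both $S$ and $\overline S$ lie in $\Sigma^0_{(bn+a)}(\emptyset')$, hence in $\Sigma^0_{(bn+a)}(X)$ by highness, and so $S \in \Delta^0_{(bn+a)}(X)$ by the equivalence applied with $Z = X$. Thus $X$ is high for $\Delta^0_{(bn+a)}$, hence high for $\Delta$-Feiner.

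I do not expect a genuine obstacle here. The only spot needing a little care is the uniformity claim in the reverse direction of the $\Delta = \Sigma \cap \Pi$ equivalence --- that a single functional index handles all $n$ --- together with the observation that this equivalence relativizes verbatim to the three oracles ($\emptyset$, $X$, and $\emptyset'$) appearing in the definitions of lowness and highness. Everything else is bookkeeping on top of Proposition~\ref{propInvariance}.
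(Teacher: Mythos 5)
Your proposal is correct and follows essentially the same route as the paper: pass from $\Delta^0_{(n)}(X)$ to $\Sigma^0_{(n)}(X)\cap\Pi^0_{(n)}(X)$, apply lowness (resp.\ highness) for $\Sigma$-Feiner to each half, and reassemble a $\Delta$ witness. You merely spell out the uniform ``simultaneous enumeration'' argument behind the step the paper leaves implicit with ``It follows that $S \in \Delta^0_{(n)}(\emptyset)$.''
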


\begin{proof}
Suppose $X$ is low for $\Sigma$-Feiner. Let $S \in
\Delta^0_{(n)}(X)$. Then $S \in \Sigma^0_{(n)}(X)$ and $S \in
\Pi^0_{(n)}(X)$, as $\Delta^0_{(n)}(X) \subset
\Sigma^0_{(n)}(X),\Pi^0_{(n)}(X)$. Since $X$ is low for
$\Sigma^0_{(n)}$, and hence low for $\Pi^0_{(n)}$, we have $S \in
\Sigma^0_{(n)}(\emptyset)$ and $S \in \Pi^0_{(n)}(\emptyset)$. It
follows that $S \in \Delta^0_{(n)}(\emptyset)$.

The proof for highness is analogous.
\end{proof}

As noted above, we conjecture that the converse to this proposition
does not hold.

We finish by noting the relationships between the classes of sets that
are $\Delta^0_{(bn+a)}$, $\Sigma^0_{(bn+a)}$, and $\Pi^0_{(bn+a)}$ for
varying $a,b$. These relationships essentially follow
from Feiner's work in~\cite{Fein1970}; see~\cite{AsKn2000}
or~\cite{Thur1994}, for example.

\begin{prop}
\label{propHierarchy} 
The classes of sets that are $\Delta^0_{(bn+a)}$, $\Sigma^0_{(bn+a)}$, and
$\Pi^0_{(bn+a)}$ satisfy the inclusions
\begin{eqnarray*}
    \begin{array}{c c c c c c c c c c c c c c c c c}
         & & \Sigma^0_{(n)} & & & & \Sigma^0_{(n+1)} & & \\

         & \subseteqne & & \subseteqse & & \subseteqne & & \subseteqse  
                 & & \subseteqne\\

         \Delta^0_{(n)} & & & & \Delta^0_{(n+1)} & & & &
         \Delta^0_{(n+2)}  & & \cdots \\

        & \subseteqse & & \subseteqne & & \subseteqse & & \subseteqne 
                 & & \subseteqse\\

        & & \Pi^0_{(n)} & & & & \Pi^0_{(n+1)} \\

      \\

        & & \Sigma^0_{(2n)} & & & & \Sigma^0_{(2n+1)} \\

        & \subseteqne & & \subseteqse & & \subseteqne & & \subseteqse
                  & & \subseteqne \\

        \Delta^0_{(2n)} & & & & \Delta^0_{(2n+1)} & & & & 
                                       \Delta^0_{(2n+2)} & & \cdots \\

        & \subseteqse & & \subseteqne & & \subseteqse & & \subseteqne 
                  & & \subseteqse \\

        & & \Pi^0_{(2n)} & & & & \Pi^0_{(2n+1)}\\
    \end{array}
\end{eqnarray*}
\end{prop}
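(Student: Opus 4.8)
My plan is to split the proposition into the \emph{upward} inclusions along each chain --- which are routine relativizations and reindexings of standard arithmetical-hierarchy facts --- and the strictness of those inclusions, which I will obtain from a single diagonal construction together with its complement. Fix $a$ and $b \geq 1$. For $\Delta^0_{(bn+a)} \subseteq \Sigma^0_{(bn+a)}$ and $\Delta^0_{(bn+a)} \subseteq \Pi^0_{(bn+a)}$, I would use that a set decidable from an oracle $Y$ is, uniformly, both c.e.\ in $Y$ and co-c.e.\ in $Y$: if $\Phi_e^Y$ is total and $\{0,1\}$-valued, then $\dom(\Phi_{e'}^Y) = \{m : \Phi_e^Y(m) = 1\}$ and $\dom(\Phi_{e''}^Y) = \{m : \Phi_e^Y(m) = 0\}$ for indices $e',e''$ computable from $e$, and taking $Y = X^{(bn+a-1)}$ gives both inclusions. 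For $\Sigma^0_{(bn+a)} \subseteq \Delta^0_{(bn+a+1)}$ (and, via complementation, $\Pi^0_{(bn+a)} \subseteq \Delta^0_{(bn+a+1)}$), I would use that ``$n \in W_e^{X^{(bn+a-1)}}$'' is a question c.e.\ in $X^{(bn+a-1)}$ and hence decidable from its jump $X^{(bn+a)}$, uniformly in $n$. The same bookkeeping, together with the fact that $X^{(m)} \leq_T X^{(\w)}$ uniformly in $m$, also yields the auxiliary claim used in the introduction that every $\Sigma^0_{(bn+a)}(X)$ --- and hence every $\Pi^0_{(bn+a)}(X)$ and $\Delta^0_{(bn+a)}(X)$ --- set is computable from $X^{(\w)}$.

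For strictness it suffices to exhibit, for each $a$ and $b$, a set lying in $\Sigma^0_{(bn+a)}(\emptyset) \setminus \Pi^0_{(bn+a)}(\emptyset)$; its complement then lies in $\Pi^0_{(bn+a)}(\emptyset) \setminus \Sigma^0_{(bn+a)}(\emptyset)$, and combining these two witnesses with the inclusions above yields all four strictness assertions of the diagram, namely $\Delta^0_{(bn+a)} \subsetneq \Sigma^0_{(bn+a)}$, $\Delta^0_{(bn+a)} \subsetneq \Pi^0_{(bn+a)}$, $\Sigma^0_{(bn+a)} \subsetneq \Delta^0_{(bn+a+1)}$, and $\Pi^0_{(bn+a)} \subsetneq \Delta^0_{(bn+a+1)}$ (and incidentally the incomparability of $\Sigma^0_{(bn+a)}$ and $\Pi^0_{(bn+a)}$). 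I would take the diagonal set $S = \{n : n \in W_n^{\emptyset^{(bn+a-1)}}\}$. The uniform procedure ``on input $m$ with oracle $Y$, enumerate $W_m^Y$ and halt if $m$ ever appears'' defines a c.e.\ operator $W_e$ with $W_e^Y = \{m : m \in W_m^Y\}$ for all $Y$, and since $n \in W_e^{\emptyset^{(bn+a-1)}} \iff n \in S$, this witnesses $S \in \Sigma^0_{(bn+a)}(\emptyset)$. If $S$ were also $\Pi^0_{(bn+a)}(\emptyset)$, then $\overline{S} \in \Sigma^0_{(bn+a)}(\emptyset)$ via some c.e.\ operator $W_j$, so $n \notin S \iff n \in W_j^{\emptyset^{(bn+a-1)}}$ for all $n$; evaluating at $n = j$ gives $j \notin S \iff j \in W_j^{\emptyset^{(bj+a-1)}} \iff j \in S$, a contradiction.

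The only step I expect to require genuine care --- everything else being a mechanical relativization of the usual picture --- is the behaviour at the bottom of each chain: the identity $X^{(m)} = (X^{(m-1)})'$ invoked in the $\Sigma$-to-$\Delta$ step fails at $m=0$ because of the convention $X^{(-1)} = X^{(0)} = X$, so $\Sigma^0_{(bn)} \subseteq \Delta^0_{(bn+1)}$ is not obtained uniformly at the single index $n = 0$. This is harmless: membership of $0$ in the relevant set is a single bit that can be built into the $\Delta$-functional (and for $X = \emptyset$, the case actually asserted, there is nothing to check at $n=0$), so the inclusion still holds. Everything else is the standard arithmetical-hierarchy pattern reindexed along the progression $bn+a$; in particular, for $b \geq 2$ the displayed chain $\Delta^0_{(bn)} \subseteq \Sigma^0_{(bn)} \subseteq \Delta^0_{(bn+1)} \subseteq \Sigma^0_{(bn+1)} \subseteq \Delta^0_{(bn+2)} \subseteq \cdots$ is just the two inclusions above applied with the offset $a$ running through $0,1,2,\dots$, and the strictness of each link is witnessed by the set $S$ above with the corresponding value of $a$.
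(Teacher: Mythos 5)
Your proposal is correct. The paper gives no proof of this proposition, merely citing Feiner, Ash--Knight, and Thurber; your argument (relativized upward inclusions plus the diagonal set $S=\{n: n\in W_n^{\emptyset^{(bn+a-1)}}\}$ and its complement to witness all four strict inclusions) is exactly the standard argument those references supply, and your handling of the $n=0$ edge case arising from the convention $X^{(-1)}=X^{(0)}=X$ by hardcoding a single bit is the right fix.
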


In particular, a set cannot be both low for $\Delta$-Feiner and high
for $\Delta$-Feiner, and similarly for $\Sigma$-Feiner.

\section{The Intermediate Turing Degrees}
\label{secTuring}

We now turn our attention to studying which Turing degrees can be low,
intermediate, or high for $\Delta$-Feiner and which can be low,
intermediate, or high for $\Sigma$-Feiner. The following proposition
was essentially noted in~\cite{Kach2010}.

\begin{prop}
\label{propLowkHighk} 
Every low$_k$ Turing degree is low for $\Delta$-Feiner and low for
$\Sigma$-Feiner. Every high$_k$ Turing degree is high for
$\Delta$-Feiner and high for $\Sigma$-Feiner.
\end{prop}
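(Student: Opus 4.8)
The plan is to verify both halves directly from the definition of low$_k$ and high$_k$, using the parameter-independence given by Proposition~\ref{propInvariance} to reduce to the most convenient instance of the hierarchy. For lowness, suppose $X$ is low$_k$, so that $X^{(k)} \tequiv \zero{k}$, and in fact this equivalence is uniform: there are indices giving $X^{(k)}$ from $\zero{k}$ and vice versa. I would show $X$ is low for $\Delta^0_{(n+k)}$, which suffices by Proposition~\ref{propInvariance}. Given $S \in \Delta^0_{(n+k)}(X)$ via a functional $\Phi_e$ with oracle $X^{(n+k-1)}$, the idea is that for each $n$, the jump $X^{(n+k-1)} = \bigl(X^{(k)}\bigr)^{(n-1)}$ is computed from $\bigl(\zero{k}\bigr)^{(n-1)} = \zero{n+k-1}$ uniformly in $n$ (relativizing the fixed reduction $X^{(k)} \tequiv \zero{k}$ through $n-1$ further jumps, which can be done uniformly by the standard jump-uniformity lemmas). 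Composing $\Phi_e$ with this uniform reduction yields a single functional witnessing $S \in \Delta^0_{(n+k)}(\emptyset)$. The analogous argument with c.e.\ operators in place of Turing functionals handles $\Sigma$-Feiner.

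For highness, suppose $X$ is high$_k$, so $X^{(k)} \tequiv \zero{k+1}$, again uniformly, and with the reduction pointing the other way ($X^{(k)}$ computes $\zero{k+1}$ uniformly). I would show $X$ is high for $\Delta^0_{(n+k)}$: given $S \in \Delta^0_{(n+k)}(\emptyset)$ via $\Phi_e$ with oracle $\zero{n+k-1}$, I want to compute $S$ from $X^{(n+k-1)}$ uniformly in $n$. Here $X^{(n+k-1)} = \bigl(X^{(k)}\bigr)^{(n-1)}$ computes $\bigl(\zero{k+1}\bigr)^{(n-1)} = \zero{n+k}$ uniformly in $n$, and in particular it computes $\zero{n+k-1}$ uniformly; composing with $\Phi_e$ gives the desired functional witnessing $S \in \Delta^0_{(n+k)}(X)$. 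Again the $\Sigma$-version is the same argument with c.e.\ operators.

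The main obstacle, and the only point requiring care, is the \emph{uniformity} of the jump-inversion/jump-propagation step: from a single reduction $X^{(k)} \tequiv \zero{k}$ (or $\tequiv \zero{k+1}$) one must extract a single index that, given $n$, produces a reduction between the $(n-1)$st jumps of the two sides. This is a routine application of the relativized form of the fact that $A \leq_T B$ uniformly implies $A' \leq_T B'$ uniformly, iterated and made uniform in the number of iterations; I would cite the standard treatment (e.g.\ in~\cite{Soar1987}) rather than reprove it. Everything else is bookkeeping with the offsets $n+k-1$ versus $n-1$, and the appeal to Proposition~\ref{propInvariance} to pass from the $(n+k)$-instance back to an arbitrary $(bn+a)$-instance.
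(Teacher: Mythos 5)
Your lowness argument is essentially the paper's: both rest on the single fact that for a low$_k$ set $X$ the equivalence $X^{(m)} \tequiv \zero{m}$ holds uniformly in $m$ for $m \geq k$ (obtained by uniformly iterating the jump on the fixed reduction $X^{(k)} \tequiv \zero{k}$), after which one composes with the given functional or c.e.\ operator. The only difference is cosmetic: the paper works directly with $\Delta^0_{(n)}$ and silently absorbs the finitely many $n < k$ (which cost only finitely many hardcoded bits), whereas you shift parameters to $\Delta^0_{(n+k)}$ and appeal to Proposition~\ref{propInvariance}; either route is fine, and your explicit attention to the uniformity of the jump-iteration step is exactly the point the paper compresses into ``the Turing reductions are uniform in $n$.''

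There is, however, a slip in the highness half. By the paper's definition, $X$ is high for $\Gamma$ when $\Gamma(\zeroj) \subseteq \Gamma(X)$, not when $\Gamma(\emptyset) \subseteq \Gamma(X)$. You start from $S \in \Delta^0_{(n+k)}(\emptyset)$, witnessed by an oracle $\zero{n+k-1}$, and then only use the fact that $X^{(n+k-1)}$ computes $\zero{n+k-1}$; but that containment holds trivially for \emph{every} $X$ and establishes nothing about highness. The correct target is $S \in \Delta^0_{(n+k)}(\zeroj)$, whose witnessing oracle is $(\zeroj)^{(n+k-1)} = \zero{n+k}$, and the ingredient you need is precisely the stronger fact you already wrote down and then discarded with ``in particular'': for high$_k$ $X$, the oracle $X^{(n+k-1)} = \bigl(X^{(k)}\bigr)^{(n-1)}$ uniformly computes $\bigl(\zero{k+1}\bigr)^{(n-1)} = \zero{n+k}$. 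Replacing the starting class and using that full-strength reduction repairs the argument, and the $\Sigma$-version then goes through with c.e.\ operators exactly as you say.
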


\begin{proof}
Let $X$ be a low$_k$ set. Provided $n \geq k$, we have
\begin{eqnarray*}
X^{(n)} = \left( X^{(k)} \right)^{(n-k)} 
       \tequiv \left( \emptyset^{(k)} \right)^{(n-k)} 
       = \emptyset^{(n)},
\end{eqnarray*}
and the Turing reductions are uniform in $n$.  It follows that $X$
is low for $\Delta^0_{(n)}$ and low for $\Sigma^0_{(n)}$, and thus
low for $\Delta$-Feiner and low for $\Sigma$-Feiner.
  
If instead $X$ is a high$_k$ set, the equivalence becomes $X^{(n)}
\tequiv \emptyset^{(n+1)}$ for $n \geq k$.  It follows that $X$ is
high for $\Delta^0_{(n)}$ and high for $\Sigma^0_{(n)}$, and thus
high for $\Delta$-Feiner and high for $\Sigma$-Feiner.
\end{proof}

The intermediate degrees have greater complexity: some are low for
$\Delta$-Feiner and $\Sigma$-Feiner, some are intermediate for
$\Delta$-Feiner and $\Sigma$-Feiner, some are high for $\Delta$-Feiner
and $\Sigma$-Feiner, and we conjecture that some behave differently
for $\Delta$-Feiner and for $\Sigma$-Feiner. In order to construct
examples, we will modify the construction of an intermediate
c.e.\ degree. We note the following properties of this construction as
given in~\cite{Soar1987}.

\begin{rmk}
\label{rmkInt} 
Let $q(x)$ be a total computable function as in the proof of Corollary
VIII.3.5 in~\cite{Soar1987}, i.e., a total computable function
satisfying
\begin{equation}
\label{eqnJump}
    Y \tless W^Y_{q(x)} \tless Y' \qquad \text{and} \qquad
    (W^Y_{q(x)})' \tequiv (W_x^{Y'}) \oplus Y'
\end{equation}
for all $x$ and $Y$. We can ensure that $(W^Y_{q(x)})^{[0]}$ is equal
to $Y \times \{0\}$ for all $x$ and $Y$. As noted in that proof, if we
take a fixed point $m$ such that $W^Y_{q(m)}=W^Y_m$ for all $Y$, which
exists by the relativized form of the Recursion Theorem, then the
degree of $W^{\emptyset}_m$ is intermediate. This fact relies only on
the properties in (\ref{eqnJump}), so we will be able to produce
intermediate degrees with additional properties by modifying $q$ while
preserving these properties.

It is easy to check from the proof of the Jump Theorem
in~\cite{Soar1987} (Theorem~VIII.3.1) that the second equivalence in
(\ref{eqnJump}) is uniform in $Y$; i.e., for each $x$ there are
functionals $\Psi$ and $\Xi$ such that $(W^Y_{q(x)})' =
\Psi^{(W_x^{Y'}) \oplus Y'}$ and $(W_x^{Y'}) \oplus Y' =
\Xi^{(W^Y_{q(x)})'}$ for all $Y$. It also follows from that proof that
we can define $\Xi$ so that for all $Y$ and $X =^* W^Y_{q(x)}$, we
have $(W_x^{Y'}) \oplus Y' = \Xi^{X'}$.
\end{rmk}

We will also need the following lemma.

\begin{lemma}
\label{lemZeron} 
There is a partial computable function $f : 2^\omega \rightarrow
\omega$ such that $f(\emptyset^{(n)})=n$ for all $n \geq 0$.
\end{lemma}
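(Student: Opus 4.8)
The plan is to build a single oracle Turing functional $\Phi$ that, on an oracle of the form $\emptyset^{(n)}$, makes only $n+1$ oracle queries and then halts with output $n$; this $\Phi$ witnesses the lemma (the function may diverge or err on oracles not of this form, which the statement permits). The guiding observation is that $f$ must \emph{converge} on the oracle $\emptyset$ with output $0$, so its halting on $\emptyset$ must already be forced by finitely much of the oracle; since $\emptyset$ is all zeros, the only way to do this is to locate a fixed coordinate whose value distinguishes $\emptyset$ from every higher jump, and then descend.

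Concretely, I would first fix, via the $s$-$m$-$n$ theorem, a computable function $h$ witnessing the uniform many-one reduction $B \leq_m B'$; that is, $h(x)$ is an index for the functional that on any oracle $Z$ and any input queries whether $x \in Z$ and halts iff so, so that $x \in B \iff h(x) \in B'$ for every $B \subseteq \omega$. Second, fix an index $e_0$ for the functional that halts on every oracle and input, so $e_0 \in B'$ for every $B$; in particular $e_0 \in \emptyset^{(m)}$ for every $m \geq 1$, whereas $e_0 \notin \emptyset^{(0)} = \emptyset$. Writing $h^{(i)}$ for the $i$-fold composition of $h$ (with $h^{(0)}$ the identity), for all $i \leq m$ we then have
\[
  h^{(i)}(e_0) \in \emptyset^{(m)} \iff h^{(i-1)}(e_0) \in \emptyset^{(m-1)} \iff \cdots \iff e_0 \in \emptyset^{(m-i)} .
\]

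Finally, I define $f$ on oracle $A$ by searching for the least $i$ with $h^{(i)}(e_0) \notin A$ and outputting that $i$; each step is a single oracle query at the computable coordinate $h^{(i)}(e_0)$. If $A = \emptyset^{(n)}$, the displayed equivalence gives, for $i < n$, that $h^{(i)}(e_0) \in A \iff e_0 \in \emptyset^{(n-i)}$, which holds because $n-i \geq 1$; and for $i = n$ it gives $h^{(n)}(e_0) \in A \iff e_0 \in \emptyset^{(0)} = \emptyset$, which fails. Hence the least such $i$ is exactly $n$, so $f(\emptyset^{(n)}) = n$, and the computation terminates after step $n$. I expect no real obstacle beyond the initial realization that $\emptyset$ itself must be correctly classified, which dictates the ``distinguished coordinate plus descent'' strategy; the remainder is routine bookkeeping with the standard uniform reduction $B \leq_m B'$.
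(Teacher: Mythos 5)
Your proof is correct and is essentially the paper's own argument: the paper defines a computable sequence $\{e_i\}$ with $e_0 \in X'$ for all $X$ and $e_{i+1} \in X'$ iff $e_i \in X$ (exactly your $e_i = h^{(i)}(e_0)$), and then sets $f(X)$ to be the least $i$ with $e_i \notin X$. The only difference is presentational, namely that you obtain the sequence by iterating the standard uniform reduction $h$ rather than invoking it implicitly.
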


\begin{proof}
Let $\{e_i\}_{i \in \w}$ be a computable sequence of integers such
that $\Phi_{e_0}^X(e_0)\converges$ for all $X$ and
$\Phi_{e_{i+1}}^X(e_{i+1})\converges$ if and only if $e_i \in
X$. Then $e_i \in \emptyset^{(n)}$ if and only if $i < n$, so we can define
$f(X)$ to be the least $i$, if any, such that $e_i \notin X$.
\end{proof}

We begin by showing the existence of an intermediate c.e.\ degree that
is low for $\Sigma$-Feiner, and hence for $\Delta$-Feiner.

\begin{thm}
\label{thmLow} 
There is a computably enumerable set $A$ of intermediate Turing
degree such that $A$ is low for $\Sigma$-Feiner.
\end{thm}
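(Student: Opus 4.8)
The plan is to modify the function $q$ from Remark~\ref{rmkInt} so that the resulting intermediate c.e.\ set $A$ computes its own jumps in a way that is "controllable" enough to kill any potential witness to non-lowness for $\Sigma$-Feiner. The key observation is that being low for $\Sigma$-Feiner means: whenever $S \in \Sigma^0_{(n)}(A)$, we must have $S \in \Sigma^0_{(n)}(\emptyset)$. Since a set $S \in \Sigma^0_{(n)}(A)$ is given by a c.e.\ operator $W_e$ with $n \in S \iff n \in W_e^{A^{(n-1)}}$, what we need is some effective way to simulate the computation $W_e^{A^{(n-1)}}$ using only $\emptyset^{(n-1)}$ (uniformly in $n$ and $e$). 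The obstacle is that in general $A^{(n-1)}$ is much more powerful than $\emptyset^{(n-1)}$ for an intermediate set. The trick, which I expect to be the heart of the construction, is to arrange that $A^{(n)} \tequiv \emptyset^{(n)}$ \emph{with a uniform reduction in one direction that is moreover computable from $\emptyset^{(n)}$ itself via an index}, so that from $\emptyset^{(n-1)}$ we can recover enough of $A^{(n-1)}$ to run the operator. But since $A$ is intermediate, $A^{(n)} \not\tequiv \emptyset^{(n)}$ for small $n$; instead, the point is that $A' \tless \emptyset''$ and more generally the jumps of $A$ sit strictly between the jumps of $\emptyset$, and we need the \emph{uniformity} across levels $n$ to be what saves us.

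Here is the more concrete approach. Recall from (\ref{eqnJump}) that with the fixed point $m$, setting $A = W^\emptyset_m$, we get $A' \tequiv (W^{\emptyset'}_m) \oplus \emptyset' \tequiv A' \oplus \emptyset'$ (using the fixed point again at the next level), and iterating, $A^{(n)} \tequiv (W^{\emptyset^{(n)}}_m) \oplus \emptyset^{(n)}$, with reductions uniform in $n$ (by the uniformity clause in Remark~\ref{rmkInt}, applied iteratively). So $A^{(n)} \tequiv B_n \oplus \emptyset^{(n)}$ where $B_n := W^{\emptyset^{(n)}}_m$. Now $B_n \tless \emptyset^{(n)}$... no wait, $B_n$ is c.e.\ in $\emptyset^{(n)}$ and strictly above it, by the first half of (\ref{eqnJump}). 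So $A^{(n)} \tequiv B_n$ and $\emptyset^{(n)} \tless A^{(n)} \tless \emptyset^{(n+1)}$, all uniformly. This means: given $\emptyset^{(n-1)}$ we do \emph{not} directly get $A^{(n-1)}$. So lowness for $\Sigma$-Feiner is \emph{not} automatic, and we genuinely need to modify the construction. The fix I would pursue: arrange that the c.e.\ operator $W_m$ is so "slow-growing" / so close to the identity that although $A$ itself is intermediate, any $\Sigma^0_{(n)}(A)$ question can be answered by first asking a $\Sigma^0_{(n)}(\emptyset)$ question that \emph{drives} a search in $A^{(n-1)}$, using the fact (ensured via a coding like Lemma~\ref{lemZeron}) that $\emptyset^{(n-1)}$ can compute the index $m$ and hence enumerate $A^{(n-1)} = B_{n-1}$ relative to $\emptyset^{(n-1)}$.

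Let me say what I think actually works. A set $S \in \Sigma^0_{(n)}(A)$ is given by: $n \in S \iff (\exists s)[n \in W_e^{A^{(n-1)}}[s]]$. Since $A^{(n-1)}$ is c.e.\ in $\emptyset^{(n-1)}$ (uniformly, via an index computable from $\emptyset^{(n-1)}$ by Lemma~\ref{lemZeron} combined with the fixed-point index $m$), the predicate "$n \in W_e^{A^{(n-1)}}$" is $\Sigma$ over $\emptyset^{(n-1)}$: there exists a stage, there exists an enumeration of a finite subset $\sigma$ of $A^{(n-1)}$ with positive-information use, such that $n \in W_e^\sigma$, \emph{and} $\sigma \subseteq A^{(n-1)}$ — but the last conjunct is only c.e.\ in $\emptyset^{(n-1)}$ if we only need positive information about $A^{(n-1)}$, which is exactly the case because c.e.\ operators use only positive information... no, $W_e$ uses the oracle $A^{(n-1)}$ with both positive and negative queries. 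So this is where the obstacle bites: negative information about $A^{(n-1)}$ requires $\emptyset^{(n)}$, not $\emptyset^{(n-1)}$. \textbf{So the main obstacle is genuinely the mismatch of one jump}, and the construction must force $A$ to be so weak that $A^{(n-1)}$'s complement is also enumerable from $\emptyset^{(n-1)}$ in the relevant sense — i.e., we must make $A^{(n)} \tequiv \emptyset^{(n)}$ after all on a cofinal set of coordinates, \emph{while keeping $A$ of intermediate degree}. This is possible precisely because "intermediate degree" only forbids $A \tequiv \emptyset$ and $A' \tequiv \emptyset'$-type collapses at \emph{every} level simultaneously with full uniformity and the low/high \emph{hierarchy}; a set can be intermediate yet have $A^{(n)} \tequiv \emptyset^{(n)}$ for, say, all $n$, as long as the reduction is \emph{not uniform} — and indeed that's the standard picture of an intermediate set. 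So the plan's crux: modify $q$ so that for the fixed point $m$, we get $A^{(n)} \tequiv_T \emptyset^{(n)}$ for each $n \geq 1$ \emph{non-uniformly} but with the reduction index computable from $\emptyset^{(n)}$; then a $\Sigma^0_{(n)}(A)$ set becomes $\Sigma^0_{(n)}(\emptyset)$ by first using $\emptyset^{(n-1)}$ to recover the relevant reduction, then asking the translated question. I would carry this out by: (1) defining the modified $q$ via the Recursion Theorem so that $W^Y_{q(x)}$ not only satisfies (\ref{eqnJump}) but also has each jump $(W^Y_{q(x)})^{(n)}$ uniformly-in-an-index Turing-below $Y^{(n)}$; (2) taking the fixed point $m$, setting $A = W^\emptyset_m$, noting it's intermediate by Remark~\ref{rmkInt}; (3) using Lemma~\ref{lemZeron} so $\emptyset^{(n)}$ knows $n$ and hence the reduction index; (4) verifying that any $S \in \Sigma^0_{(n)}(A)$ is $\Sigma^0_{(n)}(\emptyset)$, hence $A$ is low for $\Sigma$-Feiner. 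I expect step (1) — building a modification of $q$ that simultaneously preserves (\ref{eqnJump}) and installs the extra index-uniform reductions on all jumps — to be the hardest part, since it requires a careful interleaving of the original intermediate-degree construction with coding machinery that feeds the reduction indices forward through the jump hierarchy.
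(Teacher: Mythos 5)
You correctly diagnose the crux: for $S \in \Sigma^0_{(n)}(A)$, deciding $n \in W_e^{A^{(n-1)}}$ from $\emptyset^{(n-1)}$ founders on the negative oracle queries to $A^{(n-1)}$, which naively cost one extra jump. But your proposed resolution cannot work. You assert that a set can be intermediate and still satisfy $A^{(n)} \tequiv \emptyset^{(n)}$ for all (or cofinally many) $n$ provided the reductions are non-uniform; this is false, because $A^{(n)} \tequiv \emptyset^{(n)}$ is precisely the definition of $A$ being low$_n$ --- no uniformity is involved --- and an intermediate degree is by definition not low$_n$ for any $n$ (the correct picture is $\emptyset^{(n)} \tless A^{(n)} \tless \emptyset^{(n+1)}$ for every $n$). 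Likewise your step (1), installing $(W^Y_{q(x)})^{(n)} \leq_T Y^{(n)}$ uniformly in an index, is inconsistent with the second half of (\ref{eqnJump}): for any $x$ with $W_x^{Y'} \not\leq_T Y'$ one would get $(W^Y_{q(x)})' \tequiv W_x^{Y'}\oplus Y' \not\leq_T Y'$. With the collapse idea removed, your plan has no remaining mechanism to bridge the one-jump gap you identified.

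The paper's mechanism is different and much more local: it never collapses any jump of $A$. It replaces $q$ by an $r$ that imposes a restraint on the enumeration: when $f(Y)\converges = n$, once an apparent computation $\Phi_e^{W^Y_{r(x),t}}(n)[t]$ converges for some $e \leq n$, numbers below its use (outside the coding column) are barred from entering $W^Y_{r(x)}$ afterwards. Since only the single input $n=f(Y)$ and the finitely many indices $e \leq n$ are protected, this changes $W^Y_{q(x)}$ only finitely, so $W^Y_{r(x)} =^* W^Y_{q(x)}$ and the intermediate-degree machinery of Remark~\ref{rmkInt} survives at the fixed point. The payoff is exactly on the diagonal computations relevant to the Feiner hierarchy: since $f(\emptyset^{(n)})=n$, the convergence of $\Phi_e^{W_m^{\emptyset^{(n)}}}(n)$ for $e \leq n$ becomes equivalent to the existence of a stage at which an apparent computation converges, which is a $\Sigma_1(\emptyset^{(n)})$ condition; combined with a uniform procedure for computing $A^{(n)}$ from $W_m^{\emptyset^{(n)}}$, this gives a set $\widehat{S} =^* S$ in $\Sigma^0_{(n)}(\emptyset)$, hence $S \in \Sigma^0_{(n)}(\emptyset)$. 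To repair your write-up, you would need to replace the ``collapse the jumps non-uniformly'' idea with a stability/restraint argument of this kind, targeted only at the finitely many computations per oracle level that the Feiner hierarchy actually interrogates.
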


\begin{proof}
Let $q$ be as in Remark~\ref{rmkInt}, and let $f$ be as in
Lemma~\ref{lemZeron}. We will define a total computable function $r$
such that $W^Y_{r(x)} =^* W^Y_{q(x)}$ whenever $f(Y)$ is defined,
which suffices to ensure that if $m$ is a fixed point of $r$, then the
degree of $W^{\emptyset}_m$ is intermediate. We will also have
$(W^Y_{r(x)})^{[0]} = (W^Y_{q(x)})^{[0]}$ (which recall is equal to $Y
\times \{0\}$). It thus makes sense to establish the convention that
$\Phi_e^{W^Y_{r(x),t}}(k)[t]\converges$ means that this computation
converges and
\[
\left( W^Y_{r(x),t} \right)^{[0]} \uhr \phi_e^{W^Y_{r(x),t}}(k) =
\left( Y \uhr \phi_e^{W^Y_{r(x),t}}(k) \right) \times \{0\}.
\]

Define $r$ so that on oracle $Y$:
\begin{itemize}
  
\item[(a)] $(W^Y_{r(x)})^{[0]} = (W^Y_{q(x)})^{[0]}$.

\item[(b)]  If $f(Y)\diverges$ and $i \notin \omega^{[0]}$  then $i
\notin W^Y_{r(x)}$.

\item[(c)] If $f(Y)\converges = n$ and $i \notin \omega^{[0]}$ enters
$W^Y_{q(x)}$ at stage $t$, then $i \in W^Y_{r(x)}$ unless
$\Phi_e^{W^Y_{r(x),t}}(n)[t]\converges$ and $i <
\phi_e^{W^Y_{r(x),t}}(n)$ for some $e \leq n$.

\item[(d)] No other numbers are in $W^Y_{r(x)}$.

\end{itemize}

The point of item (c) is that (given our convention above) it ensures
that if $e \leq n=f(Y)$ and $\Phi_e^{W^Y_{r(x)}}(n)\converges$, then
$\Phi_e^{W^Y_{r(x)}}(n) = \Phi_e^{W^Y_{r(x),t}}(n)[t]$ for the least
$t$ such that $\Phi_e^{W^Y_{r(x),t}}(n)[t]\converges$. Thus
$\Phi_e^{W^Y_{r(x)}}(n)\converges$ if and only if there is a $t$ such
that $\Phi_e^{W^Y_{r(x),t}}(n)[t]\converges$, which is a
$Y$-c.e.\ condition.

We have $W^Y_{r(x)} =^* W^Y_{q(x)}$ whenever $f(Y)$ is defined, so $r$
satisfies (\ref{eqnJump}) for all such $Y$. Thus, if we let $m$ be a
fixed point of $r$ and let $A=W_m^\emptyset$, then $A$ has
intermediate degree as in  Corollary VIII.3.5 of~\cite{Soar1987}. We
are left with showing that $A$ is low for $\Sigma$-Feiner. We do so by
showing that there is a uniform procedure for computing $A^{(n)}$ from
$W_m^{\emptyset^{(n)}}$, and then applying the previous paragraph.

If $f(Y)$ is defined then the difference between $W^Y_{r(x)}$ and
$W^Y_{q(x)}$ can be computed uniformly from $Y'$, so for each $x$
there is a functional $\Theta$ (defined using the functional $\Psi$ in
Remark~\ref{rmkInt}) for which $(W^Y_{r(x)})' = \Theta^{(W_x^{Y'})
\oplus Y'}$ for all $Y$ such that $f(Y)$ is defined. Since $Y'$ is
encoded into the $0$th column of $W_m^{Y'}=W_{r(m)}^{Y'}$, taking
$x=m$ we actually have a functional $\Theta$ such that  $(W^Y_m)'
= \Theta^{(W_m^{Y'})}$ for all $Y$ such that $f(Y)$ is defined.

Thus $A' = (W_m^\emptyset)' = \Theta^{(W_m^{\emptyset'})}$.
Similarly, $A'' = (W_m^\emptyset)'' = (\Theta^{(W_m^{\emptyset'})})'$,
which is computable from $(W_m^{\emptyset'})' =
\Theta^{(W_m^{\emptyset''})}$ via a reduction that can be found
uniformly from an index for $\Theta$. Continuing in this manner, we
obtain a uniform procedure for computing $A^{(n)}$ from
$W_m^{\emptyset^{(n)}}$.

Now let $S \in \Sigma^0_{(n)}(A)$. Then, by the above, there is a
functional $\Phi_e$ such that $n \in S$ if and only if
$\Phi_e^{W_m^{\emptyset^{(n)}}}(n)\converges$. For $n \geq e$, we can
use the oracle $\emptyset^{(n)}$ to search for a $t$ such that
$\Phi_e^{W_{m,t}^{\emptyset^{(n)}}}(n)[t]\converges$, enumerating $n$
into a set $\widehat{S}$ when such a $t$ is found. By construction,
$\Phi_e^{W_m^{\emptyset^{(n)}}}(n)\converges$ if and only if there is
such a $t$, so $\widehat{S} =^* S$. Since $\widehat{S}$ is in
$\Sigma^0_{(n)}(\emptyset)$, so is $S$. Thus $A$ is low for
$\Sigma$-Feiner.
\end{proof}

We now show the existence of an intermediate c.e.\ degree that is high
for $\Sigma$-Feiner, and hence for $\Delta$-Feiner.

\begin{thm}
\label{thmHigh}
There is a computably enumerable set $A$ of intermediate Turing
degree such that $A$ is high for $\Sigma$-Feiner.
\end{thm}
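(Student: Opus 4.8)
The plan is to follow the proof of Theorem~\ref{thmLow} in outline, but with the flow of information reversed: there, a $\Sigma^0_1(A^{(n-1)})$ question at level $n$ was pushed \emph{down} to a $\Sigma^0_1(\emptyset^{(n)})$ question, whereas here a $\Sigma^0_1(\emptyset^{(n)})$ question must be pushed \emph{up} to a $\Sigma^0_1(A^{(n-1)})$ question. As before, I would start from the function $q$ of Remark~\ref{rmkInt} and the function $f$ of Lemma~\ref{lemZeron}, define a total computable $r$ with a fixed point $m$, and set $A = W^\emptyset_m$. I would arrange that $W^Y_{r(x)}$ still satisfies~(\ref{eqnJump}) whenever $f(Y)$ is defined --- keeping $\bigl(W^Y_{r(x)}\bigr)^{[0]} = Y\times\{0\}$ and keeping intact the ``$q$-machinery'' columns that code $W_x^{Y'}$ into the jump --- so that, by Remark~\ref{rmkInt} and the relativized Recursion Theorem, $A$ has intermediate Turing degree and $A^{(n)} \tequiv W^{\emptyset^{(n)}}_m$ uniformly in $n$; in particular $A^{(n-1)}$ computes $\emptyset^{(n-1)}$ from the zeroth column. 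By Proposition~\ref{propInvariance} it then suffices to show that $A$ is high for $\Sigma^0_{(n)}$, i.e.\ that for every $\Phi_e$ there is a $\Phi_{e'}$ with $\Phi_e^{\emptyset^{(n)}}(n)\converges \iff \Phi_{e'}^{A^{(n-1)}}(n)\converges$ for all but finitely many $n$; since $\Sigma^0_{(n)}(A)$ is closed under finite modification (hard-code the values $S(n)$ for the exceptional $n$), this gives $S\in\Sigma^0_{(n)}(A)$ for every $S\in\Sigma^0_{(n)}(\emptyset')$.

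The new ingredient is a coding device that, when $f(Y)=k$, builds into $W^Y_{r(x)}$ extra columns --- one for each relevant $e$ --- recording enough of the enumeration of $Y'$ to let the truth value of ``$\Phi_e^{Y'}(k+1)\converges$'' be recovered from the \emph{final} set by a $\Sigma^0_1$ procedure. This is the quantity the requirement at level $n=k+1$ needs, because $A^{(n-1)}\tequiv W^{\emptyset^{(n-1)}}_m$ and $(\emptyset^{(n-1)})'=\emptyset^{(n)}$. A natural mechanism, dual to the ``protection by withholding'' of item~(c) in the proof of Theorem~\ref{thmLow}, is: using oracle $Y$, run the enumeration of $Y'$; whenever $\Phi_{e,s}^{Y'_s}(k+1)$ converges with use-segment $\tau$, \emph{register} $\tau$ in the $e$th coding column, and whenever a position on which $\tau$ takes value $0$ is later seen to enter $Y'$, \emph{cancel} $\tau$. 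A functional with oracle $A^{(n-1)}\tequiv W^{\emptyset^{(n-1)}}_m$ can then search for a $\tau$ that is registered and not cancelled --- both decidable from a column of the oracle --- and halt if one is found. If $\Phi_e^{\emptyset^{(n)}}(n)\converges$, its true use-segment is eventually registered and never cancelled; conversely a registered, uncancelled $\tau$ has all its $1$-positions in $\emptyset^{(n)}$ automatically and all its $0$-positions outside $\emptyset^{(n)}$ because it was never cancelled, so $\tau=\emptyset^{(n)}\uhr|\tau|$ and the recorded computation is genuine. Hence the search halts if and only if $\Phi_e^{\emptyset^{(n)}}(n)\converges$.

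The main obstacle --- and what I expect to be the real content of the proof --- is to incorporate these coding columns into the Sacks-style construction underlying $q$ \emph{without destroying}~(\ref{eqnJump}), since the intermediate-degree conclusion of Corollary~VIII.3.5 of~\cite{Soar1987} rests entirely on that equation. Keeping the coding columns computably enumerable in $Y$ keeps $W^Y_{r(x)}$ below $Y'$, but the jump equation $\bigl(W^Y_{r(x)}\bigr)'\tequiv W_x^{Y'}\oplus Y'$ is delicate: the $e$th coding column codes a bit --- essentially ``$\Phi_e^{Y'}(f(Y)+1)\converges$'' --- that is \emph{not} uniformly $Y'$-computable (one checks that such a column cannot be low over $Y$), so it genuinely contributes to $\bigl(W^Y_{r(x)}\bigr)'$, and the construction must be arranged so that at the fixed point $W_m^{Y'}\oplus Y'$ still computes all of $\bigl(W^Y_{r(m)}\bigr)'$, uniformly. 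I would attempt this in the spirit of the Jump Theorem construction of~\cite{Soar1987} (Theorem~VIII.3.1), interleaving the coding with the usual jump-coding region and imposing appropriate restraints, and exploiting the self-referential balance between what $W_m^Y$ must record about $Y'$ and what $W_m^{Y'}$ at the next level is already recording. Once this is in place, the verifications that $A$ has intermediate degree, that the decoding above is uniform in $n$, and that the various reductions compose, all proceed exactly as in the proof of Theorem~\ref{thmLow} and Remark~\ref{rmkInt}.
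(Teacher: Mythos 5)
Your overall architecture is the paper's: start from $q$ and $f$, modify the operator so that on oracle $Y$ with $f(Y)=k$ it codes the answers to ``$\Phi_e^{Y'}(\cdot)\converges$?'' into extra $Y$-c.e.\ columns recoverable by a $\Sigma^0_1$ search, take a fixed point $m$, set $A=W_m^\emptyset$, and transfer along the uniform equivalence $A^{(n)}\tequiv W_m^{\emptyset^{(n)}}$. The gap sits exactly where you place ``the real content of the proof'': preserving~(\ref{eqnJump}). You leave that step as an unexecuted plan to re-engineer the Jump Theorem construction with interleaved restraints, and your diagnosis of the difficulty is mistaken, which is why you miss the simple resolution. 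At each oracle only finitely many $e$ are relevant (those $e\le f(Y)$, say; larger $e$ can be handled by hard-coding finitely many values of $S$), and each coding column need only carry a single bit. The paper's operator $V$ makes the $e$th column either all of $\omega$ (if $\Phi_e^{Y'}(n)\diverges$) or a finite initial segment $[0,t]$ recording the minimal stage of a permanently correct computation. Such a $V^Y$ is computable outright given finitely many bits of advice, so $W^Y_{q(x)}\oplus V^Y\tequiv W^Y_{q(x)}$ and~(\ref{eqnJump}) survives with no surgery on the underlying construction. The non-uniformity is harmless: only the $\Sigma^0_1$ recoverability of the bit must be uniform, and that is supplied by searching the column for a missing element. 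Your assertion that the coding column ``cannot be low over $Y$'' and ``genuinely contributes to $\bigl(W^Y_{r(x)}\bigr)'$'' is therefore a non sequitur---finitely many bits never raise a degree or a jump.

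Moreover, your specific register/cancel mechanism aggravates the problem it is meant to avoid. The column of all use-segments $\tau$ that ever appear as correct-looking initial segments of $Y'_s$, together with their cancellation markers, is c.e.\ in $Y$ but of uncontrolled Turing degree, a priori anywhere between $Y$ and $Y'$ (the length-$m$ strings it registers form a chain, by monotonicity of the enumeration of $Y'$, whose maximum is $Y'\uhr m$). With such columns adjoined you can no longer conclude $W^Y_{r(x)}\tless Y'$ nor the jump identity in~(\ref{eqnJump}), so intermediateness of $A$ is not secured, and the repair you gesture at is not carried out. Replacing your columns by the finite-or-cofinite columns described above closes the gap; the remainder of your argument (the $\Sigma^0_1$ recovery and its correctness, the uniform translation between $A^{(n)}$ and $W_m^{\emptyset^{(n)}}$, and the appeal to Proposition~\ref{propInvariance} to fix the off-by-one in the levels) is sound and matches the paper.
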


\begin{proof}
Again let $q$ be as in Remark~\ref{rmkInt} and let $f$ be as in
Lemma~\ref{lemZeron}.

It is not difficult to see that there is a c.e.\ operator $V$ so that
on oracle $Y$:
\begin{itemize}

\item[(a)] If $f(Y)\diverges$, then $V^Y = \emptyset$.

\item[(b)] If $f(Y)\converges = n$, then:
\begin{itemize}

\item If $e > n$, then the $e$th column of $V^Y$ is empty.

\item If $e \leq n$ and $\Phi_e^{Y'}(n)\diverges$, then the $e$th
column of $V^Y$ is $\omega \times \{e\}$.

\item Otherwise, the $e$th column of $V^Y$ is $[0,t] \times \{e\}$,
where $t$ is minimal such that $\Phi_e^{Y_t'}(n)[t]\converges$ and
$Y_t' \uhr \phi_e^{Y_t'}(n) = Y' \uhr \phi_e^{Y_t'}(n)$.

\end{itemize}
\end{itemize}
Here we are thinking of a standard $Y$-enumeration of $Y'$.

Let $s : \w \to \w$ be a total computable function such that, on
oracle $Y$,
\begin{eqnarray*}
W^Y_{s(x)} =
\begin{cases}
\emptyset            & \mbox{if $f(Y)\diverges$} \\
W^Y_{q(x)} \oplus V^Y  & \mbox{otherwise}.
\end{cases}
\end{eqnarray*}
The definition of $s$ ensures that if $f(Y)$ is defined, then
$W^Y_{s(x)} \tequiv W^Y_{q(x)}$, so (\ref{eqnJump}) holds for $s$ in
place of $q$.  Moreover, for each $x$ there is a single functional
$\Theta$ (defined using the functional $\Xi$ in Remark~\ref{rmkInt})
such that $W_x^{Y'} = \Theta^{(W^Y_{s(x)})'}$. Furthermore, for any
$x$ and $e$, we can compute $\Phi_e^{Y'}(n)$ (in the sense of
computing a partial function) uniformly from $W^{Y}_{s(x)}$ for $Y$
such that $f(Y) \converges = n$.

Let $m$ be a fixed point of $s$, let $A=W_m^\emptyset$, and let
$\Theta$ be as above with $x=m$. Then $A$ is intermediate as in
Corollary VIII.3.5 of~\cite{Soar1987}. It follows from the definition
of $m$, $A$, and $\Theta$ that $W_m^{\emptyset'} =
\Theta^{(W_m^\emptyset)'} = \Theta^{A'}$. Similarly,
$(W_m^{\emptyset'})'$ can be obtained from $A'' = (W_m^\emptyset)''$
via a reduction that can be found from an index for $\Theta$, and
hence so can $W_m^{\emptyset''} =
\Theta^{(W_m^{\emptyset'})'}$. Continuing in this manner, we obtain a
uniform procedure for computing $W^{\emptyset^{(n)}}_m$ from
$A^{(n)}$.

Now suppose that $W_e = \dom \Phi_e$ witnesses that $S \in
\Sigma^0_{(n)}(\emptyset')$.  Then there is a uniform procedure for
computing $\Phi_e^{\emptyset^{(n+1)}}(n)$ from
$W^{\emptyset^{(n)}}_m$, and hence from $A^{(n)}$.  Thus $S \in
\Sigma^0_{(n)}(A)$, and so $A$ is high for $\Sigma$-Feiner.
\end{proof}

We next show the existence of a c.e.\ degree that is intermediate for
$\Delta$-Feiner, and hence for $\Sigma$-Feiner (and hence is also
intermediate in the usual sense).

\begin{thm}
\label{thmIntermediate} 
There is a computably enumerable set that is intermediate for
$\Delta$-Feiner.
\end{thm}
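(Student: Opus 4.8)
The plan is to fuse the constructions of Theorems~\ref{thmLow} and~\ref{thmHigh}. We build $A = W_m^{\emptyset}$ as a fixed point of a total computable function $s$ obtained by modifying the function $q$ of Remark~\ref{rmkInt}: on an oracle $Y$ with $f(Y)\diverges$ we put $W^Y_{s(x)} = W^Y_{q(x)}$, and on an oracle $Y$ with $f(Y)\converges = n$ we run $W^Y_{q(x)}$ together with one ``coding action'' associated to the unique requirement, if there is one, whose assigned witness is $n+1$ (the requirements being those listed below, and the witness assignment a fixed computable injection). As in the proof of Theorem~\ref{thmLow} we keep $(W^Y_{s(x)})^{[0]} = Y\times\{0\}$, and we make each coding action change $W^Y_{s(x)}$ from $W^Y_{q(x)}$ by only finitely much, so that $W^Y_{s(x)} =^* W^Y_{q(x)}$ whenever $f(Y)\converges$. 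Then $s$ satisfies~(\ref{eqnJump}) for every such $Y$, using the robustness of the $\Xi$ in Remark~\ref{rmkInt} under $=^*$ changes, so $A$ has intermediate Turing degree as in Corollary~VIII.3.5 of~\cite{Soar1987}; and, iterating the uniform form of the Jump Theorem exactly as in the earlier proofs and using that $Y'$ sits in column $0$ of $W_m^{Y'}$, we obtain uniform reductions in \emph{both} directions, $A^{(n)}\tequiv W_m^{\zero{n}}$, uniformly in $n$. Since $A$ then has intermediate degree it is neither low$_k$ nor high$_k$ for any $k$; we will make $A$ neither low nor high for $\Delta^0_{(n)}$, which by Proposition~\ref{propInvariance} makes it intermediate for $\Delta$-Feiner.

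For non-lowness we build $S\in\Delta^0_{(n)}(A)\setminus\Delta^0_{(n)}(\emptyset)$. For each $e$ there is a requirement $N_e$ asserting that $\Phi_e$ does not witness $S\in\Delta^0_{(n)}(\emptyset)$; let $n_e$ be its witness. The coding action for $N_e$, carried out on the oracle $Y=\zero{n_e-1}$ (recognized since $f(Y)=n_e-1$), is: using oracle $Y$, search for a stage with $\Phi_e^{Y}(n_e)\converges$ and, if one is found with value $0$, enter a fixed coding number into a reserved column of $W^Y_{s(x)}$ disjoint from the columns used by $W^Y_{q(x)}$. Setting $S(n_e) := [\,\text{coding number}\in W_m^{\zero{n_e-1}}\,]$ and $S(n) := 0$ for $n$ not a witness, we get $S(n_e) = 1 - \Phi_e^{\zero{n_e-1}}(n_e)$ whenever $\Phi_e^{\zero{n_e-1}}(n_e)$ converges to $0$ or $1$; this meets $N_e$, since otherwise $\Phi_e$ is not a legal witness. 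On the other hand $S\in\Delta^0_{(n)}(A)$, because $S(n)$ is decided by $A^{(n-1)}$ via the uniform reduction $W_m^{\zero{n-1}}\leq_T A^{(n-1)}$.

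For non-highness we build $T\in\Delta^0_{(n)}(\emptyset')$ with $T\notin\Delta^0_{(n)}(A)$. The point is that the uniform reduction $A^{(n-1)}\leq_T W_m^{\zero{n-1}}$ turns any functional witnessing $T\in\Delta^0_{(n)}(A)$ into a single index $j$ with $\Phi_j^{W_m^{\zero{n-1}}}(n)=T(n)$ for all $n$; so it suffices to meet, for each $j$, the requirement $H_j$ asserting that $\Phi_j^{W_m^{\zero{n-1}}}(n)\neq T(n)$ for some $n$. Let $n_j$ be the witness of $H_j$. Its coding action on the oracle $Y=\zero{n_j-1}$ is a protection device as in item~(c) of the proof of Theorem~\ref{thmLow}: at the first stage $t$, if any, at which $\Phi_j^{W^Y_{s(x),t}}(n_j)[t]\converges$ in the sense of that proof's column-$0$ convention, forbid every later non-column-$0$ element below the use from entering $W^Y_{s(x)}$. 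This preserves the computation, so ``$\Phi_j^{W_m^{\zero{n_j-1}}}(n_j)\converges$'' becomes a $\zero{n_j-1}$-c.e.\ statement; hence from oracle $\zero{n_j}$ we may decide it and, if it holds, recover the value, and we set $T(n_j) := 1-\Phi_j^{W_m^{\zero{n_j-1}}}(n_j)$ if this value is $0$ or $1$ and $T(n_j) := 0$ otherwise (and $T(n):=0$ for $n$ not a witness). Then $T(n)$ is decided by $\zero{n}$ uniformly in $n$, so $T\in\Delta^0_{(n+1)}(\emptyset)=\Delta^0_{(n)}(\emptyset')$; and $H_j$ is met, so $T\notin\Delta^0_{(n)}(A)$, showing $A$ is not high for $\Delta^0_{(n)}$. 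Because each protection forbids only finitely many elements on its oracle, and distinct requirements have distinct witnesses and hence act on distinct oracles $\zero{k}$, the actions do not interfere with one another, with $W^Y_{s(x)} =^* W^Y_{q(x)}$, or with the column-$0$ structure.

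The bulk of the work is checking that this single construction supports everything at once: that $(W^Y_{s(x)})^{[0]}=Y\times\{0\}$ together with $W^Y_{s(x)}=^*W^Y_{q(x)}$ really yields the two-sided uniform equivalences $A^{(n)}\tequiv W_m^{\zero{n}}$ when the Jump Theorem is iterated; that the item~(c)-style bookkeeping (with the column-$0$ correctness convention) genuinely persists each protected computation and makes ``$\Phi_j^{W^Y_{s(x)}}(n_j)\converges$'' a $Y$-c.e.\ condition; and that the $N_e$- and $H_j$-actions coexist with the enumeration of $W^Y_{q(x)}$, which is why we dilute $W^Y_{q(x)}$ onto part of the columns (as the use of $\oplus$ does in Theorem~\ref{thmHigh}) and keep each action finite. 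The one idea beyond Theorems~\ref{thmLow} and~\ref{thmHigh} is to use the two directions of $A^{(n)}\tequiv W_m^{\zero{n}}$ asymmetrically: the reduction $W_m^{\zero{n}}\leq_T A^{(n)}$ lets us \emph{encode} the diagonal set $S$ into $A$, while the reduction $A^{(n)}\leq_T W_m^{\zero{n}}$ lets us reduce the failure of $T\in\Delta^0_{(n)}(A)$ to diagonalizing against the single functionals $\Phi_j^{W_m^{\zero{n-1}}}$, which the construction controls directly.
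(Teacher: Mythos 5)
Your proposal is correct in outline, but it reaches the theorem by a genuinely different route than the paper. The paper builds \emph{two} c.e.\ sets $A_0,A_1$ (fixed points of operators $p_0,p_1$ that apply the freezing device of Theorem~\ref{thmLow} on oracles $\zero{k}$ of one parity and a coding device on the other) and proves the mutual non-inclusions $\Delta^0_{(n)}(A_0)\nsubseteq\Delta^0_{(n)}(A_1)$ and $\Delta^0_{(n)}(A_1)\nsubseteq\Delta^0_{(n)}(A_0)$; intermediateness of both then falls out indirectly, since lowness of $A_i$ would force $\Delta^0_{(n)}(A_i)\subseteq\Delta^0_{(n)}(\emptyset)\subseteq\Delta^0_{(n)}(A_{1-i})$, while highness would force $\Delta^0_{(n)}(A_{1-i})\subseteq\Delta^0_{(n)}(\zeroj)\subseteq\Delta^0_{(n)}(A_i)$ because $A_{1-i}$ is c.e. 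You instead build a single set and meet the two kinds of requirements head-on: non-lowness by having the operator on oracle $Y=\zero{n_e-1}$ detect the ($Y$-c.e.) event $\Phi_e^{Y}(n_e)\converges$ and code the opposite answer into a reserved location of $W^Y_{s(x)}$, and non-highness by the item~(c) freezing device applied to the single computation $\Phi_j^{W^Y_{s(x)}}(n_j)$, which makes the diagonalizing set $T$ land in $\Delta^0_{(n)}(\zeroj)$. Your approach makes explicit which mechanism defeats which kind of witness and avoids the second set entirely; the paper's buys both failures from one symmetric diagonalization at the cost of the indirect inclusion argument. Both versions rest on the same two-sided uniform equivalence $A^{(n)}\tequiv W_m^{\zero{n}}$, which you correctly identify as the load-bearing fact and use asymmetrically. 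One small point to tidy: you assert both that $W^Y_{s(x)}=^*W^Y_{q(x)}$ and that you dilute $W^Y_{q(x)}$ onto part of the columns; these are incompatible as stated. If you make room for coding via a recursive join, the relation is not $=^*$ but a uniformly computable translation whose finite ``extra'' part is computable from $Y'$ --- exactly the situation the paper handles with its pair of functionals $\Gamma$ and $\Lambda$ --- and you should verify (2.1) and the uniform jump equivalences in that form rather than appealing to the $=^*$-robustness of $\Xi$ alone.
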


\begin{proof}
We will define sets $A_0$ and $A_1$ such that $\Delta^0_{(n)}(A_0)
\nsubseteq \Delta^0_{(n)}(A_1)$ and $\Delta^0_{(n)}(A_1) \nsubseteq
\Delta^0_{(n)}(A_0)$. Then both $A_0$ and $A_1$ must be intermediate
for $\Delta$-Feiner.
  
Let $q$ be as in Remark~\ref{rmkInt}, let $f$ be as in
Lemma~\ref{lemZeron}, and let $r$ be as in the proof of
Theorem~\ref{thmLow}. Let $V$ be a c.e.\ operator so that on oracle
$Y$: 
\begin{itemize}

\item[(a)] If $f(Y)\diverges$, then $V^Y=\emptyset$.

\item[(b)] If $f(Y)\converges = n$, then if
$\Phi_e^{W^Y_{r(x),t-1}}(n)[t-1]\diverges$ and
$\Phi_e^{W^Y_{r(x),t}}(n)[t]\converges$ (under the same convention
as in the proof of Theorem~\ref{thmLow}) for some $e \leq n$, then
all numbers less than $t$ are enumerated into $V^Y$.

\end{itemize}
Note that if $f(Y)\converges=n$ then $V^Y$ is finite (by item (c) in
the definition of $r$), and for the least $t \notin V^Y$ and every $e
\leq n$, we have
$\Phi_e^{W^Y_{r(x)}}(n)=\Phi_e^{W^Y_{r(x),t}}(n)[t]$. Let $s : \w \to
\w$ be a total computable function such that $W^Y_{s(x)} = W^Y_{q(x)}
\oplus V^Y$ for all $Y$.

For $i \in \{ 0,1 \}$, let $p_i : \w \to \w$ be a total computable
function such that
\begin{eqnarray*}
W^Y_{p_i(x)} =
\begin{cases}
\emptyset   & \mbox{if $f(Y)\diverges$} \\
W^Y_{s(x)}    & \mbox{if $f(Y)\converges = i \bmod 2$} \\
W^Y_{r(x)}    & \mbox{otherwise}.
\end{cases}
\end{eqnarray*}
The definition of the $p_i$ ensures that if $f(Y)$ is defined, then
$W^Y_{p_i(x)} \tequiv W^Y_{q(x)}$, the set $Y$ is effectively coded in
$W^Y_{p_i(x)}$, and the difference between $W^Y_{p_i(x)}$ and
$W^Y_{q(x)}$ is uniformly computable in $Y'$. Thus, for each $x$ and
$i$, there is a functional $\Theta$ such that $(W^Y_{p_i(x)})' =
\Theta^{W_x^{Y'} \oplus Y'}$ for all $Y$ such that $f(Y)$ is
defined. Furthermore, if $f(y)\converges \neq i \bmod 2$, then
$W^Y_{p_i(x)} =^* W^Y_{q(x)}$, and if $f(Y) \converges = i \bmod 2$,
then $W^Y_{p_i(x)}$ codes $W^Y_{q(x)}$ in its even bits. So, for each
$x$ and $i$, there are functionals $\Gamma$ and $\Lambda$ such that
$W_x^{Y'} = \Gamma^{(W^Y_{p_i(x)})'}$ if $f(Y) = i \bmod 2$ and
$W_x^{Y'} = \Lambda^{(W^{Y}_{p_i(x)})'}$ otherwise, for all $Y$ such
that $f(Y)$ is defined.

Let $m_i$ be a fixed point of $p_i$ and let
$A_i=W_{m_i}^\emptyset$. By the previous paragraph, and arguing as in
the previous two proofs, there are uniform procedures for computing
$A_i^{(n)}$ from $W_{m_i}^{\emptyset^{(n)}}$ and vice-versa. As
mentioned above, it suffices to show that $\Delta^0_{(n)}(A_0)
\nsubseteq \Delta^0_{(n)}(A_1)$ and $\Delta^0_{(n)}(A_1) \nsubseteq
\Delta^0_{(n)}(A_0)$.

Towards a contradiction, assume that $\Delta^0_{(n)}(A_0) \subseteq
\Delta^0_{(n)}(A_1)$.  Define a total function $h : \w \to 2$ as
follows. If $n$ is odd, then $h(n)=0$. Otherwise, let $t$ be least
such that $2t+1 \notin W^{\emptyset^{(n)}}_{m_0}$ (which must exist by
the definition of the operator $V$), let $e=n/2$, and compute
$\Phi_e^{W_{r(m_1),t}^{\emptyset^{(n)}}}(n)[t]$. If this value is
defined, then let $h(n)$ be different from it; otherwise let $h(n)=0$.

The value $h(n)$ can be computed uniformly from
$W^{\emptyset^{(n)}}_{m_0}$, and hence from $A_0^{(n)}$, so $h \in
\Delta^0_{(n)}(A_0)$.  Thus, by assumption, $h \in
\Delta^0_{(n)}(A_1)$.  Consequently, there is a functional $\Phi_e$
such that $\Phi_e^{W_{p_1(m_1)}^{\emptyset^{(n)}}}(n) =
\Phi_e^{W_{m_1}^{\emptyset^{(n)}}}(n) = h(n)$ for all $n$. Let
$n=2e$. Then, by the definition of $r$ and $p$, for the least $t$ such
that $2t+1 \notin W^{\emptyset^{(n)}}_{m_0}$, we have
$\Phi_e^{W_{p_1(m_1)}^{\emptyset^{(n)}}}(n) = \Phi_e^{W_{r(m_1)}^{\emptyset^{(n)}}}(n) =
\Phi_e^{W_{r(m_1),t}^{\emptyset^{(n)}}}(n)[t]$, so $h(n) \neq
\Phi_e^{W_{p_1(m_1)}^{\emptyset^{(n)}}}(n)$, yielding a contradiction.

The symmetric argument shows that $\Delta^0_{(n)}(A_1) \nsubseteq
\Delta^0_{(n)}(A_0)$. It follows that both $A_0$ and $A_1$ are
intermediate for $\Delta$-Feiner.
\end{proof}

We finish with the following conjectures.

\begin{conj}
There is a computably enumerable set that is intermediate for
$\Sigma$-Feiner but low for $\Delta$-Feiner. 
\end{conj}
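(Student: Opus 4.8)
The plan is to build $A = W^{\emptyset}_m$ for a fixed point $m$, following the template of Remark~\ref{rmkInt} and Theorem~\ref{thmLow} (refined as in Theorems~\ref{thmHigh} and~\ref{thmIntermediate}). Using $q$ from Remark~\ref{rmkInt} and $f$ from Lemma~\ref{lemZeron}, I would produce a total computable function $r'$ with $W^Y_{r'(x)} \tequiv W^Y_{q(x)}$ whenever $f(Y)\converges$ — which makes $A$ have intermediate degree once $m$ is a fixed point of $r'$ — while arranging a uniform Turing equivalence between $A^{(n-1)}$ and $W_m^{\emptyset^{(n-1)}}$, so that $W_m^{\emptyset^{(n-1)}}$ lies strictly between $\emptyset^{(n-1)}$ and $\emptyset^{(n)}$ uniformly in $n$. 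It suffices to make $A$ low for $\Delta$-Feiner but not low for $\Sigma$-Feiner: once $A$ is low for $\Delta$-Feiner it is not high for $\Delta$-Feiner (by the remark following Proposition~\ref{propHierarchy}), hence not high for $\Sigma$-Feiner (by Proposition~\ref{propSigmaDelta}), so $A$ is then intermediate for $\Sigma$-Feiner. The feature to exploit is that item~(c) in Theorem~\ref{thmLow} protects, at each argument $n$, the computations $\Phi_e^{W^Y_{r(x)}}(n)$ for \emph{all} $e \leq n$, and this is exactly what pins both the ``$\Sigma$'' and the ``$\Delta$'' questions at argument $n$ down to level $n$. My plan is to keep this protection for every functional except one designated functional $\Phi_{e^*}$, whose convergence I will instead steer for the sake of a diagonalization.

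For the $\Sigma$-side, I would fix an index $e^*$ for a functional that, on oracle $Z$, converges at $n$ if and only if a fixed element of a reserved column $c^*$ lies outside $Z$. I would then run, one per index $k$ of a c.e.\ operator and on distinct arguments $n_k$, a requirement $N_k$ that diagonalizes against $W_k$: using an auxiliary c.e.\ operator $V$ (combined with $r'$ into a single computable function preserving property~(\ref{eqnJump}), in the spirit of Theorems~\ref{thmHigh} and~\ref{thmIntermediate}), whenever $f(Y)\converges = n_k$ and $n_k$ is observed to enter the $Y$-c.e.\ set $W_k^Y$, I would enumerate the reserved column-$c^*$ element into $W^Y_m$, thereby permanently killing the computation $\Phi_{e^*}^{W^Y_m}(n_k)$; if $n_k$ never enters $W_k^Y$, that element never enters and the computation survives. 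This yields $\Phi_{e^*}^{W_m^{\emptyset^{(n_k-1)}}}(n_k)\converges$ if and only if $n_k \notin W_k^{\emptyset^{(n_k-1)}}$. Setting $S = \{n : \Phi_{e^*}^{W_m^{\emptyset^{(n-1)}}}(n)\converges\}$, the uniform reduction $W_m^{\emptyset^{(n-1)}} \leq_T A^{(n-1)}$ gives $S \in \Sigma^0_{(n)}(A)$, while the disagreement at each $n_k$ prevents $S$ from equaling $\{n : n \in W_k^{\emptyset^{(n-1)}}\}$, so $S \notin \Sigma^0_{(n)}(\emptyset)$; hence $A$ is not low for $\Sigma$-Feiner.

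The hard part will be showing that $A$ remains low for $\Delta$-Feiner. I would keep item~(c)-style protection for every $\Phi_j$ with $j \leq n$ other than $\Phi_{e^*}$, so that for any total $\{0,1\}$-valued Feiner computation $n \mapsto \Phi_j^{W_m^{\emptyset^{(n-1)}}}(n)$ the value is, for $n \geq j$, both a $\Sigma^0_n(\emptyset)$ and a $\Pi^0_n(\emptyset)$ question — making the associated set $\Delta^0_{(n)}(\emptyset)$, with the finitely many $n < j$ hard-coded. The obstacle is the clash between this protection and the column-$c^*$ injuries driving the $N_k$: a total $\Phi_j$ may query the reserved bit of column $c^*$, and then a late injury on behalf of $N_k$ destroys a $\Phi_j$-computation we meant to protect, so that the final value of $\Phi_j$ at that argument becomes tied to whether $n \in W_k^{\emptyset^{(n-1)}}$ — a $\Sigma^0_n$ rather than $\emptyset^{(n-1)}$-decidable question, which would only place the set in $\Delta^0_{(n+1)}(\emptyset)$. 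A natural remedy is to delay $N_k$ until the computations $\Phi_j^{W_m^{\emptyset^{(n_k-1)}}}(n_k)$ for $j \leq n_k$ have stabilized (which, away from column $c^*$, they do under the retained protection) and then place the reserved column-$c^*$ bit beyond all of their uses; but a $\Phi_j$-computation that only converges afterward, with a use large enough to reach that bit, would still be at risk from the eventual flip of $n_k$ into $W_k^Y$, forcing $N_k$ to be reset to an even larger position. Making this precise appears to require interleaving the $N_k$ with the $\Delta$-preservation in a genuine priority argument — one with unbounded delays, so perhaps an infinite-injury one — rather than the recursion-theorem-plus-clean-operator arguments used in the theorems above; carrying that out is exactly what is needed to settle the conjecture.
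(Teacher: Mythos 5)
This statement is one of the two closing conjectures of the paper; the authors offer no proof, so there is no argument of theirs to compare yours to, and the only question is whether your proposal closes the conjecture. It does not, and you say so yourself in the last sentence. The outer reductions you set up are sound: it does suffice to make $A$ low for $\Delta$-Feiner but not low for $\Sigma$-Feiner (non-highness for $\Sigma$-Feiner then follows from the remark after Proposition~\ref{propHierarchy} together with Proposition~\ref{propSigmaDelta}), and the $\Sigma$-side diagonalization via a reserved column and an auxiliary c.e.\ operator is a plausible adaptation of the machinery of Theorems~\ref{thmHigh} and~\ref{thmIntermediate} (modulo an indexing slip between $f(Y)=n_k$ and $Y=\emptyset^{(n_k-1)}$, which is cosmetic). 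The genuine gap is exactly where you locate it: the verification that $A$ is still low for $\Delta$-Feiner.

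The difficulty is structural, not merely technical. The event that triggers the injury to column $c^*$ at argument $n_k$ --- namely $n_k$ entering $W_k^{Y}$ --- is in general a properly $\Sigma^0_1(Y)$ event, so any protected functional $\Phi_j$ whose use at $n_k$ covers the reserved bit has its final value determined only after a $\Sigma^0_{n_k}$-complete question is resolved; the resulting Feiner set then lands a priori only in $\Delta^0_{(n+1)}(\emptyset)$, which is not lowness. Your proposed fix (delay $N_k$ until the $\Phi_j$-computations for $j\leq n_k$ settle, then place the reserved bit above their uses) does not terminate, because a $\Phi_j$-computation may first converge after the bit is placed, with use reaching it, and each such late convergence forces another reset; whether these resets can be organized into a priority argument that simultaneously preserves property~(\ref{eqnJump}), the uniform equivalence of $A^{(n)}$ with $W_m^{\emptyset^{(n)}}$, and the successful diagonalization of infinitely many $N_k$ is precisely the open content of the conjecture. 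Until that interleaving is carried out and verified, the statement remains unproved.
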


\begin{conj}
There is a computably enumerable set that is intermediate for
$\Sigma$-Feiner but high for $\Delta$-Feiner.
\end{conj}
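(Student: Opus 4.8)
The plan is to build $A$ by modifying the intermediate c.e.\ degree construction once more, in the spirit of Theorems~\ref{thmHigh} and~\ref{thmIntermediate}. Starting from $q$ as in Remark~\ref{rmkInt} and $f$ as in Lemma~\ref{lemZeron}, I would define a total computable $p$ with $W^Y_{p(x)} \tequiv W^Y_{q(x)}$ whenever $f(Y)$ is defined (with $Y$ coded in the $0$th column and the difference $Y'$-computable), take a fixed point $m$, and set $A = W^\emptyset_m$; as in the earlier proofs this gives $A$ of intermediate Turing degree with uniform procedures computing $A^{(n)}$ from $W^{\emptyset^{(n)}}_m$ and conversely. Into $W^Y_{p(x)}$ I would fold two pieces of extra machinery: a ``$\Delta$-coding'' operator $C$ forcing $A$ to be high for $\Delta$-Feiner, and a diagonalization forcing $A$ \emph{not} to be high for $\Sigma$-Feiner. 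These two features suffice: being high for $\Delta$-Feiner precludes being low for $\Delta$-Feiner (Proposition~\ref{propHierarchy}), hence low$_n$ for every $n$ (Proposition~\ref{propLowkHighk}) and hence low for $\Sigma$-Feiner (Proposition~\ref{propSigmaDelta}); while failing to be high for $\Sigma$-Feiner precludes being high$_n$ for any $n$ (Proposition~\ref{propLowkHighk}). Thus such an $A$ would automatically be of intermediate degree and intermediate for $\Sigma$-Feiner.

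For the $\Delta$-highness half, the target is an operator $C$ and a functional $\Xi$ so that, for $e \leq f(Y)$, $\Xi^{C^Y}(e)$ converges and equals $\Phi_e^{Y'}(f(Y)+1)$ \emph{whenever the latter converges}. Given such $C$ and $\Xi$, if $S \in \Delta^0_{(n)}(\emptyset')$ is witnessed by a $\Phi_e$ with $\Phi_e^{\emptyset^{(k)}}(k)\converges \in \{0,1\}$ for all $k$, then applying the coding at $Y = \emptyset^{(k-1)}$ (where $f(Y) = k-1$ and $Y' = \emptyset^{(k)}$) gives $\chi_S(k) = \Xi^{C^{\emptyset^{(k-1)}}}(e)$ for $k > e$, computable from $W^{\emptyset^{(k-1)}}_m$ and hence from $A^{(k-1)}$; a finite modification at levels $k \leq e$ then shows $S \in \Delta^0_{(n)}(A)$, so $A$ is high for $\Delta^0_{(n)}$ and hence high for $\Delta$-Feiner. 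The essential---and delicate---point is that this decoding must have \emph{offset zero}: the value of $\Phi_e^{\emptyset^{(k)}}(k)$ must be read off $A$ already at level $k-1$, since a constant loss of jumps in the decoding does not close up under Proposition~\ref{propInvariance} and so would not witness highness.

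The non-$\Sigma$-highness half asks only for a single set $S^*$ with $S^* \in \Sigma^0_{(n)}(\emptyset') \setminus \Sigma^0_{(n)}(A)$. I would take $S^*$ of the form $\{k : \Phi_u^{\emptyset^{(k)}}(k)\converges\}$ for an index $u$ supplied by the recursion theorem, and build the construction to meet requirements $R_j$ guaranteeing that for each $j$ there is a $k$ with $[\Phi_j^{A^{(k-1)}}(k)\converges]$ disagreeing with $[k \in S^*]$. Here the recursion theorem is used both to fix $m$ and to name the candidate $\Sigma^0_{(n)}(A)$-machines as they act on $A^{(k-1)} \tequiv W^{\emptyset^{(k-1)}}_m$, so that the coding for $C$ and the diagonalization can be arranged coherently. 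The obstacle---which I expect to be the heart of the matter---is precisely the reconciliation of the two halves. Theorem~\ref{thmHigh} already produces an offset-zero decoding of \emph{values}, but through a coding from which \emph{convergence} of the underlying $\emptyset^{(k)}$-computations is also semi-decidable, and it is exactly that extra information that upgrades it all the way to $\Sigma$-highness. What is needed here is an offset-zero coding that exposes ``value-when-total'' without exposing ``convergence''---so that every $\Delta^0_{(n)}(\emptyset')$ set is captured but the $\Sigma^0_{(n)}(\emptyset')$ set $S^*$ is not---while still keeping $W^Y_{p(x)} \tequiv W^Y_{q(x)}$ (so the degree stays intermediate) and keeping $C$ a $Y$-c.e.\ operator. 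Effecting this separation is what makes the statement a conjecture rather than a theorem, and it seems likely to demand a genuine injury argument---nested finite injury or infinite injury---rather than the clean fixed-point modifications that sufficed for Theorems~\ref{thmLow}, \ref{thmHigh}, and~\ref{thmIntermediate}.
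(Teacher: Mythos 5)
The statement you were asked about is not a theorem of the paper at all: it is stated there as an open conjecture, with no proof given. Your write-up, read honestly, is also not a proof. What you have done correctly is reduce the conjecture to a cleaner target and locate the obstruction: (i) it suffices to build a c.e.\ set that is high for $\Delta$-Feiner but not high for $\Sigma$-Feiner, since high for $\Delta$-Feiner rules out low for $\Delta$-Feiner (Proposition~\ref{propHierarchy}), hence rules out low for $\Sigma$-Feiner (Proposition~\ref{propSigmaDelta}) and low$_n$-ness, while failure of $\Sigma$-highness rules out high$_n$-ness (Proposition~\ref{propLowkHighk}); and (ii) your observation about offsets is right --- a decoding that recovers $\Phi_e^{\emptyset^{(k)}}(k)$ only from $A^{(k-1+c)}$ with $c\geq 1$ yields a cross-parameter containment such as $\Delta^0_{(n)}(\emptyset')\subseteq\Delta^0_{(n+c)}(A)$, which holds for trivial reasons and is not repaired by Proposition~\ref{propInvariance}, whereas the coding in Theorem~\ref{thmHigh} achieves exactly the one-jump gap that the definition of highness builds in.

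The gap is that the object your argument hinges on is never constructed. You need a $Y$-c.e.\ operator, foldable into $W^Y_{p(x)}$ while preserving (\ref{eqnJump}) (so the fixed point still has intermediate degree) and the uniform back-and-forth between $A^{(n)}$ and $W_m^{\emptyset^{(n)}}$, that exposes the \emph{values} of all total $\{0,1\}$-valued $\Phi_e^{Y'}(f(Y)+1)$ at zero offset without making convergence of the underlying $Y'$-computations semi-decidable from the coded set --- and simultaneously you must defeat every candidate $\Sigma^0_{(n)}(A)$ witness for some fixed $\Sigma^0_{(n)}(\emptyset')$ set $S^*$, even though the coding itself keeps pushing information about $\emptyset^{(k)}$-computations into $A^{(k-1)}$. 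You acknowledge that reconciling these demands is the heart of the matter and that it likely requires a genuine injury argument; no such argument, nor even a precise formulation of the requirements and their interaction, is supplied. So the proposal is a reasonable and in places insightful plan of attack, but the statement remains exactly what the paper says it is: a conjecture, unproven both there and here.
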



\begin{thebibliography}{9}

\bibitem{AsKn2000} C. J. Ash and J. Knight, Computable Structures and
the Hyperarithmetical Hierarchy, Studies in Logic and the Foundations
of Mathematics 144, North-Holland Publishing Co., Amsterdam, 2000.

\bibitem{Fein1970} Lawrence Feiner, Hierarchies of Boolean algebras,
J. Symbolic Logic 35:365--374, 1970.

\bibitem{Kach2010} Asher M. Kach, Depth zero Boolean algebras,
Trans. Amer. Math. Soc. 362(8):4243--4265, 2010.

\bibitem{KnSt2000} Julia F. Knight and Michael Stob, Computable
Boolean algebras, J. Symbolic Logic 65(4):1605--1623, 2000.

\bibitem{Soar1987} Robert I. Soare, Recursively Enumerable Sets and
Degrees, Perspectives in Mathematical Logic, Springer-Verlag, Berlin,
1987.

\bibitem{Thur1994} John J. Thurber, Recursive and r.e.\ quotient
Boolean algebras, Arch. Math. Logic 33(2):121--129, 1994.

\end{thebibliography}
\end{document}